\providecommand{\U}[1]{\protect\rule{.1in}{.1in}}
\newtheorem{theorem}{Theorem}[section]
\newtheorem*{acknowledgement*}{Acknowledgements}
\newtheorem{corollary}[theorem]{Corollary}
\newtheorem{definition}[theorem]{Definition}
\newtheorem{lemma}[theorem]{Lemma}
\newtheorem{proposition}[theorem]{Proposition}
\newtheorem{remark}[theorem]{Remark}
\def\RRR{{\mathrm R}}
\def\Ric{{\mathrm{Ric}}}
\def\vol{{\mathrm {vol}}}
\def\e{{\varepsilon}}
\begin{document}
\title[Extremals of Log Sobolev inequality]{Extremals of Log Sobolev inequality on non-compact manifolds and Ricci soliton structures}

\subjclass[2010]{53C21, 53C44, 35J20}
\keywords{Log Sobolev inequality, extremals, Ricci solitons}

\date{\today}

\author {Michele Rimoldi}
\address[Michele Rimoldi]{Dipartimento di Scienze Matematiche "Giuseppe Luigi Lagrange", Politecnico di Torino, Corso Duca degli Abruzzi 24, I-10129 Torino, Italy}
\email{michele.rimoldi@polito.it}
\author {Giona Veronelli}
\address[Giona Veronelli]{Dipartimento di Matematica e Applicazioni, Universit\`a di Milano Bicocca, via R. Cozzi 53, I-20126 Milano, Italy}
\email{giona.veronelli@unimib.it}

\begin{abstract}
In this paper we establish the existence of extremals for the Log Sobolev functional on complete non-compact manifolds with Ricci curvature bounded from below and strictly positive injectivity radius, under a condition near infinity. This extends a previous result by Q. Zhang where a $C^1$ bound on the whole Riemann tensor was assumed. When Ricci curvature is also bounded from above we get exponential decay at infinity of the extremals. As a consequence of these analytical results we establish, under the same assumptions, that non-trivial shrinking Ricci solitons support a gradient Ricci soliton structure. On the way, we prove two results of independent interest: the existence of a distance-like function with uniformly controlled gradient and Hessian on complete non-compact manifolds with bounded Ricci curvature and strictly positive injectivity radius and a general growth estimate for the norm of the soliton vector field. This latter is based on a new Toponogov type lemma for manifolds with bounded  Ricci curvature, and represents the first known growth estimate for the whole norm of the soliton field in the non-gradient case.
\end{abstract}

\maketitle
\tableofcontents

\section{Introduction and main results}

A Ricci soliton structure on a Riemannian manifold $(M^m, g)$ is the choice of a vector field $X$ (if any) such that 
\begin{equation}\label{RS}
\mathrm{Ric}+\frac{1}{2}\mathcal{L}_Xg=\lambda_{S} g,
\end{equation}
for some constant $\lambda_S\in\mathbb{R}$. The soliton is called expanding, steady or shrinking if, respectively, $\lambda_S<0$, $\lambda_S=0$ or $\lambda_S>0$. When $X=\nabla f$ , for some $f\in C^{\infty}(M)$, we say that the Ricci soliton is gradient.

It is well known that expanding and steady compact Ricci solitons are Einstein; \cite{ivey}. Using the existence of extremals of the $\mathcal{W}$-entropy and its monotonicity formula, G. Perelman proved that every non-trivial compact shrinking Ricci soliton supports a shrinking gradient Ricci soliton structure; \cite{P1} (see also \cite{ELNM} for a completely elliptic proof of this result, not involving Perelman's monotonicity formula). This result was later extended by A. Naber, \cite{N}, to complete non-compact shrinking Ricci solitons with bounded curvature tensor. However, rather than using extremals of the $\mathcal{W}$-entropy he used a careful analysis of the reduced length function and Ricci flow's convergence techniques.

About the converse, it is important to notice that J. Carrillo and L. Ni, \cite{CN}, proved that potential functions on gradient shrinking Ricci solitons provide extremals for the $\mathcal{W}$-entropy also in the complete non-compact case without any curvature assumption. Hence, the problem of finding an extremal for the $\mathcal{W}$-entropy and the problem of finding a gradient Ricci soliton structure on a manifold supporting a complete shrinking Ricci soliton structure are equivalent modulo justification for an integration by parts, as one can see from the proof in \cite{ELNM}. A natural question that arises, and which is the initial motivation of this paper, is if the elliptic technique coming from \cite{P1}, \cite{ELNM} can be extended to guarantee the existence of a gradient Ricci soliton structure on a complete non-compact shrinking Ricci soliton considering different curvature conditions from those considered in \cite{N}.

An essential step in this program is to study the existence of extremals of the $\mathcal{W}$-entropy. This functional turns out to be intimately tied with the usual Log Sobolev functional which was extensively studied in literature. In particular, the existence problem for extremal functions of the Log Sobolev functional in the compact case was solved by O. Rothaus, \cite{Ro}. On the other hand, in the complete non-compact case, in \cite[Remark 3.2]{P1} Perelman raised the question whether extremals for the Log Sobolev functional exist. A motivation for studying the complete non-compact case is the fact that many interesting singularity models of the Ricci flow are non-compact, the cylinder $\mathbb{S}^2\times\mathbb{R}$ being the most basic example.

Recently there have been some progresses on this problem in \cite{ZHQ}, where Q. S. Zhang provided a result on the existence of extremals on complete non-compact manifolds with bounded geometry satisfying an extra condition at infinity which avoids the escape at infinity of the entropy content of a minimizing sequence. Here bounded geometry means that the Riemann curvature tensor and all its covariant derivatives are bounded and that there is a uniform positive lower bound on the volume of geodesic balls of radius $1$. It is well known that these conditions imply a positive lower bound on the injectivity radii on the manifold; \cite{CheegerPhD}. Furthermore, in the same paper, Zhang also proved that an extremal function may not exist if the condition at infinity is violated.
\medskip

The main analytical result of this paper establishes the existence of extremals for the Log Sobolev functional replacing the bounded geometry condition considered in \cite{ZHQ} with only a lower control on the Ricci curvature and a positive lower bound on the injectivity radius. Moroever, asking bounded Ricci curvature, we can conclude that the extremal function decays more than exponentially at infinity.

For the detailed definitions of the functional $\mathcal{L}$, the best Log Sobolev constant $\lambda$, and the best Log Sobolev constant at infinity $\lambda_{\infty}$, we refer, respectively, to \eqref{LS_funct}, \eqref{bls_const}, and \eqref{bls_constinf} in Section \ref{DefNot} below.

\begin{theorem}\label{ExtrLSI}
Let $(M^m, g)$ be a (connected) complete non-compact Riemannian manifold and suppose that
\begin{equation}\label{Our}
\mathrm{Ric}\geq -(m-1)K\qquad\mathrm{and}\qquad\mathrm{inj}_{(M,g)}\geq i_{0}>0,
\end{equation}
for some $K\in[0,+\infty)$, $i_{0}\in\mathbb{R}^{+}$. If $\lambda<\lambda_{\infty}$, then there exists a smooth extremal $v$ for the Log Sobolev functional $\mathcal{L}$.
In addition, if instead of the bound $\mathrm{Ric}\geq-(m-1)K$ we assume that
\begin{equation*}
|\mathrm{Ric}|\leq(m-1)K,
\end{equation*}
then, having fixed a point $o\in M$, there exist positive constants $C, c>0$ such that the extremal $v$ satisfies
\begin{equation}\label{ExpDec}
v(x)\leq Ce^{-cd^{2}(x,o)},
\end{equation}
for any $x\in M$.
\end{theorem}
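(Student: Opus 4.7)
The plan splits naturally into two parts: establish existence by the direct method together with a concentration--compactness argument, and then derive the Gaussian decay of the extremal via a barrier analysis of its Euler--Lagrange equation.

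For the existence, let $\{v_k\}\subset W^{1,2}(M)$ be a minimizing sequence for $\mathcal{L}$ with $\|v_k\|_{L^{2}}=1$. The lower Ricci bound and the positive injectivity radius yield a uniform local Sobolev inequality (Croke--Hebey), which combined with the $L^2$-control of $|\nabla v_k|$ built into $\mathcal{L}$ bounds $\{v_k\}$ in $W^{1,2}(M)$. Extracting a subsequence, $v_k\rightharpoonup v$ weakly in $W^{1,2}$, strongly in $L^{2}_{\mathrm{loc}}$, and pointwise a.e. The crucial step is to rule out $L^2$-mass loss at infinity: if a positive fraction $\mu$ of $\|v_k\|_{L^2}^{2}$ escaped any compact set, a Brezis--Lieb type decomposition of the nonlinear term $\int v^{2}\log v^{2}$ would force the escaping piece to contribute at least $\mu\,\lambda_{\infty}$ to the limit of $\mathcal{L}(v_k)$, yielding a bound of the form $\lambda\ge(1-\mu)\lambda+\mu\lambda_{\infty}$, in contradiction with the strict gap $\lambda<\lambda_{\infty}$. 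Hence $\|v\|_{L^2}=1$ and lower semicontinuity gives $\mathcal{L}(v)=\lambda$. Smoothness follows by elliptic bootstrap applied to the Euler--Lagrange equation, which has the schematic form
$$-4\Delta v+Rv-2v\log v=(\lambda+\mathrm{const})v.$$

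For the Gaussian decay, the strong maximum principle ensures $v>0$, so we may set $f:=-2\log v$; a direct computation turns the equation above into a Perelman-type identity
$$2\Delta f-|\nabla f|^{2}+R+f=\lambda+\mathrm{const}.$$
To prove $f(x)\ge c\,d^{2}(x,o)-C$ I would compare $f$ with the barrier $\psi:=c\tilde r^{2}$, where $\tilde r$ is the smooth distance-like function with uniformly bounded gradient and Hessian constructed earlier in the paper (this is where the two-sided Ricci bound enters). Evaluating the identity at a prospective interior minimum of $u:=f-\psi$ in a large annulus, the equality $\nabla f=\nabla\psi$ turns $-|\nabla f|^{2}$ into $-4c^{2}\tilde r^{2}|\nabla\tilde r|^{2}$, while $f$ contributes $\sim c\tilde r^{2}$; for $c$ small enough, the resulting $c\tilde r^{2}(1-4c|\nabla\tilde r|^{2})$ term strictly dominates all remaining $O(1)$ contributions (bounded scalar curvature, bounded Hessian of $\tilde r$, constant right-hand side). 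A comparison on a compact exhaustion then forces $f\ge \psi - C$ globally, and \eqref{ExpDec} follows because $\tilde r$ is uniformly comparable to $d(\cdot,o)$.

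The principal obstacle will be the concentration--compactness step. Because $\mathcal{L}$ is not scaling-invariant, Lions's classical profile decomposition does not transfer mechanically; the technical heart of the existence proof is therefore the careful splitting of minimizing sequences via cutoffs of $\tilde r$ and the verification that any residual $L^2$-mass at infinity saturates the Log Sobolev quotient exactly at $\lambda_{\infty}$, so that the strict gap hypothesis can finally be exploited. In the decay step the difficulty is less conceptual but equally essential: without the quantitative Hessian bound on $\tilde r$, the error terms in the barrier computation would grow with $\tilde r$ rather than remain $O(1)$, and the dominant-term argument would collapse.
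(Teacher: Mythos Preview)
Your existence argument follows a genuinely different route from the paper's. The paper does \emph{not} take an arbitrary minimizing sequence and run concentration--compactness. Instead it uses Rothaus's result to produce, on each smooth sublevel set $D(o,k)$ of a distance-like function, a nonnegative extremal $v_k$ which \emph{solves} the Euler--Lagrange equation there; the $v_k$ are extended by zero and form the minimizing sequence. Because each $v_k$ is a solution, Moser-type mean value inequalities (Lemma~\ref{2.1}) give uniform local $C^1$ bounds. The gap hypothesis $\lambda<\lambda_\infty$ is then used only to show $v\not\equiv 0$: if $v\equiv 0$, the mean value estimates force $v_k\to 0$ in $C^1_{\mathrm{loc}}$, and cutting $v_k$ off near $o$ yields competitors supported in $M\setminus E_i$ with $\mathcal L$-value arbitrarily close to $\lambda$, contradicting $\lambda<\lambda_\infty$. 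Crucially, the paper never appeals to lower semicontinuity of $\mathcal L$; it proves directly that the weak limit $v$ is a weak (hence smooth, positive) solution of the Euler--Lagrange equation, and then uses the equation and a Gaffney--Stokes argument to compute $\mathcal L(v)=\lambda\int v^2$, from which $\|v\|_2=1$ follows by a rescaling contradiction.

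Your plan can in principle be made to work, but two steps are more delicate than you indicate. First, ``lower semicontinuity gives $\mathcal L(v)=\lambda$'' is not automatic: the term $-\int v^2\ln v^2$ is \emph{not} weakly lower semicontinuous, and even after you secure strong $L^2$ convergence you must control the part of $(v_k^2\ln v_k^2)_-$ supported where $v_k$ is small, which on an infinite-volume manifold is not dominated by any $L^p$ norm with $p\ge 2$. The paper sidesteps this entirely by using the equation rather than semicontinuity. Second, the Brezis--Lieb splitting you invoke for $t\mapsto t^2\ln t^2$ is non-standard (the nonlinearity is neither homogeneous nor convex), and the clean inequality $\lambda\ge(1-\mu)\lambda+\mu\lambda_\infty$ actually requires an additional nonnegative entropy correction $-(1-\mu)\ln(1-\mu)-\mu\ln\mu$ coming from renormalising the two pieces; this helps rather than hurts, but it should be said. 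What your approach would buy, if completed, is that it does not rely on Rothaus's compact-domain existence result as a black box. Your decay argument via the barrier $c\tilde r^2$ for $f=-2\ln v$ is essentially the content of the paper's Lemma~\ref{2.3} (an extension of Zhang's lemma using Proposition~\ref{DistLike}), so that part is aligned.
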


\begin{remark}
\textbf{(a)} Even though the Log Sobolev functional $\mathcal{L}$ we are dealing with contains the scalar curvature $R$, the result still holds if one deletes the term containing the scalar curvature. The proof requires only minor adjustement. \\
\textbf{(b)} For comments on the generality of the condition at infinity $\lambda<\lambda_{\infty}$ we refer to \cite{ZHQ}, where also some examples of Riemannian manifolds satisfying this condition are constructed.
\end{remark}

As we said above, the conclusion of Theorem \ref{ExtrLSI} was obtained by Zhang under the stronger assumption of bounded geometry (in the sense explained above). Basically, he needs this assumption to ensure:
\begin{itemize}
\item [(i)] the validity of a Sobolev inequality and of Bishop-Gromov comparison theorem;
\item[(ii)] the existence of a distance-like function with uniformly controlled gradient and Hessian, which in turn is used to get an a-priori decay of the type \eqref{ExpDec} for subsolutions of the Euler-Lagrange equation relative to the Log Sobolev functional;
\item[(iii)] Hamilton's version of Cheeger-Gromov compactness theorem for pointed Riemannian manifolds.
\end{itemize}
In fact, in his proof he constructs a minimizing sequence of subsolutions $\{v_k\}$ to the Euler-Lagrange equation for $\mathcal{L}$, each of them satisfying $v_k(x_k)>\delta>0$ at some point $x_k\in M$. In case the sequence $\{x_k\}$ is unbounded, one can apply Cheeger-Gromov convergence to $\{(M, g, x_k)\}$ and obtain the existence of a subsolution of the Euler-Lagrange equation on the limit manifold. This is showed to be in contradiction with the condition $\lambda<\lambda_\infty$. Hence $\{x_k\}$ is necessarily bounded. In this second case classical arguments give the existence of a smooth extremal. To treat both the bounded and unbounded case he uses the exponential decay alluded to in point (ii); see \cite[Lemma 2.3]{ZHQ}. 

It is well known that our assumption \eqref{Our} is sufficient to guarantee both Sobolev inequality and Bishop-Gromov comparison. As a matter of fact, up to asking also an upper bound on $\mathrm{Ric}$, one can also get the validity of (ii). This follows from the following result, which apparently has never been observed before in literature in this generality. Under stronger assumptions this was obtained by L.-F. Tam in \cite{T}. 


\begin{proposition}\label{DistLike}
Given $m\geq 2$, $K\in\left[0,\infty\right)$, there exists a constant $C_{m,K}\in\left(1,\infty\right)$, depending only on $m$ and $K$, such that if $(M^{m}, g)$ is a complete non-compact Riemannian manifold with $|\mathrm{Ric}|\leq (m-1) K$ and $\mathrm{inj}_{(M,g)}\geq i_{0}>0$, $o\in M$ and $r(x):=d(x,o)$, then there exists $h\in C^{\infty}(M)$ such that
\begin{eqnarray}
	&r(x)+1\leq h(x)\leq r(x)+C_{m,K}\label{A}\\
	&|\nabla h|(x)\leq C_{m,K}\label{B}\\
	&|\mathrm{Hess}(h)|(x)\leq C_{m,K}.\label{C}
\end{eqnarray} 
\end{proposition}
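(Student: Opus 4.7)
The strategy is to obtain $h$ by smoothly patching together local mollifications of the distance function $r(x):=d(x,o)$ through a partition of unity subordinate to a cover by harmonic coordinate charts of uniform size. The technical backbone is Anderson's harmonic coordinate theorem: under a two-sided Ricci bound and positive lower bound on the injectivity radius, for any fixed $\alpha\in(0,1)$ there is a harmonic radius $r_{0}=r_{0}(m,K,i_{0})>0$ such that at each $p\in M$ one has a harmonic coordinate chart $\varphi_{p}\colon B(p,r_{0})\to\mathbb{R}^{m}$ in which the metric coefficients are uniformly $C^{1,\alpha}$-close to $\delta_{ij}$. In these charts the Euclidean and Riemannian norms are comparable and the Christoffel symbols are uniformly bounded in $L^{\infty}$.

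Pick a maximal $\tfrac{r_{0}}{4}$-separated set $\{x_{k}\}\subset M$, so that $\{B(x_{k},r_{0}/2)\}$ covers $M$; by Bishop--Gromov (which uses only the lower Ricci bound), the enlarged cover $\{B(x_{k},r_{0})\}$ has multiplicity at most some $N=N(m,K,i_{0})$. Pulling back bump functions through the harmonic charts, construct a subordinate smooth partition of unity $\{\phi_{k}\}$ with $|\nabla\phi_{k}|+|\mathrm{Hess}(\phi_{k})|\leq C_{1}$. In each chart I would mollify $r$ by convolution with a standard Euclidean kernel at the fixed scale $\varepsilon:=r_{0}/10$, obtaining $\rho_{k}\in C^{\infty}(B(x_{k},r_{0}/2))$ satisfying $|\rho_{k}-r|\leq C_{2}\varepsilon$ together with, from the $1$-Lipschitz character of $r$, Euclidean bounds $|\partial\rho_{k}|\leq C_{3}$ and $|\partial^{2}\rho_{k}|\leq C_{3}\,\varepsilon^{-1}\leq C_{4}$. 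The harmonic-coordinate identity $\mathrm{Hess}_{g}(\rho_{k})_{ij}=\partial_{i}\partial_{j}\rho_{k}-\Gamma^{\ell}_{ij}\partial_{\ell}\rho_{k}$ combined with the $L^{\infty}$-bound on the Christoffel symbols then upgrades these to Riemannian bounds $|\nabla\rho_{k}|_{g}+|\mathrm{Hess}_{g}(\rho_{k})|\leq C_{5}$.

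Set $h:=C_{0}+\sum_{k}\phi_{k}\rho_{k}$ for a constant $C_{0}$ to be fixed at the end. The partition-of-unity identities $\sum_{k}\phi_{k}\equiv 1$, $\sum_{k}\nabla\phi_{k}\equiv 0$, $\sum_{k}\mathrm{Hess}(\phi_{k})\equiv 0$ allow one to rewrite
\begin{equation*}
\nabla h=\sum_{k}\phi_{k}\nabla\rho_{k}+\sum_{k}(\rho_{k}-r)\nabla\phi_{k},
\end{equation*}
and similarly for the Hessian after subtracting multiples of $r$ inside each sum. Since at every point only $N$ indices contribute and each summand is uniformly controlled through $|\rho_{k}-r|\leq C_{2}\varepsilon$ and the bounds above, the gradient and Hessian estimates \eqref{B} and \eqref{C} follow. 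The linear-growth sandwich \eqref{A} is obtained from $|h-C_{0}-r|\leq C_{2}\varepsilon$ and an additive adjustment of $C_{0}$.

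The technical heart of the argument, and its main obstacle, is the Hessian bound on the local smoothings $\rho_{k}$: the distance function $r$ has no pointwise Hessian bound under a merely bounded Ricci assumption (only its Laplacian is controlled, through the Laplacian comparison theorem), so the second-order regularity of $\rho_{k}$ must be produced by the mollification itself. This forces the convolution scale to be a fixed geometric constant, which is precisely what Anderson's theorem delivers; and the passage from Euclidean to Riemannian Hessian relies crucially on the $C^{1,\alpha}$-regularity of $g$ in harmonic coordinates, without which the Christoffel correction terms could not be absorbed into a bound independent of the point.
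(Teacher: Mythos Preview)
Your approach is correct but differs substantially from the paper's. The paper follows Tam's heat-flow regularization: one starts from a Greene--Wu distance-like function $u$ with $|u-r|\leq 1$ and $|\nabla u|\leq 2$, and sets $h(x):=\int_M H(x,y,1)\,u(y)\,d\mathrm{vol}(y)$, where $H$ is the heat kernel. The bounds \eqref{A} and \eqref{B} then follow from standard heat-kernel estimates under a lower Ricci bound alone (as in Chow et al., \cite{CCG_3}). For the Hessian bound \eqref{C} the paper invokes Anderson's theorem only at the very end: in a harmonic chart at $x$ the function $\hat h_x(v,t):=h(\exp_x v,t)-u(x)$ solves a uniformly parabolic equation $\partial_t \hat h_x=\hat g^{ij}\partial_i\partial_j \hat h_x$ with $C^{0,1/2}$ coefficients, and interior parabolic Schauder estimates give a uniform $C^{2,1/2}$ bound at $t=1$, which is converted back to a Riemannian Hessian bound via the same Christoffel-symbol correction you use.

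The trade-offs are as follows. Your construction is more elementary, avoiding heat-kernel existence and parabolic regularity theory entirely, and places Anderson's theorem at the centre from the outset; the paper's route is more intrinsic (no cover, no partition of unity, no chart-by-chart gluing) and obtains \eqref{A}--\eqref{B} without any appeal to harmonic coordinates. Both arguments ultimately rest on the same key analytic input --- uniform $C^{1,\alpha}$ control of the metric in harmonic coordinates under $|\mathrm{Ric}|\leq (m-1)K$ and $\mathrm{inj}\geq i_0$ --- to manufacture the second-order regularity that the distance function itself lacks.
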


However, in point (iii) the bounds on the full curvature tensor seem unavoidable. To circumvent this problem, we propose a different strategy to prove the existence of the extremal. By the way, our proof does not require an a-priori exponential decay for subsolutions (and hence an upper bound for $\mathrm{Ric}$). The validity of \eqref{ExpDec} is nevertheless proved a-posteriori, since it will be exploited in the proof of the geometric result.

Our proof goes as follows. As in \cite{ZHQ}, we consider a minimizing sequence $\{v_k\}$, bounded in $W^{1,2}$, made up of subsolutions of the Euler-Lagrange equation. The assumption $\lambda<\lambda_\infty$ is then used to guarantee that the weak limit $v$ is not null. Then we prove that $v$ is a weak solution and standard regularity theory applies. All along the proof, the equation is used repeatedly to deal with the logarithmic term.\\
   
Exploiting Theorem \ref{ExtrLSI} we are able to obtain the following geometric consequence.

\begin{theorem}\label{th_RicSol}
Let $(M^m, g)$ be a connected complete non-compact Riemannian manifold which supports a shrinking Ricci soliton structure. Suppose that there exist positive constants $K$ and $i_0$ such that
\begin{equation*}
|\mathrm{Ric}|\leq(m-1)K,\qquad\mathrm{and}\qquad\mathrm{inj}_{(M,g)}\geq i_{0}>0
\end{equation*}
and that $\lambda<\lambda_{\infty}$. Then $(M^m, g)$ supports also a shrinking gradient Ricci soliton structure.
\end{theorem}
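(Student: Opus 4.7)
The strategy is to transplant the elliptic argument of Eminenti, La Nave and Mantegazza \cite{ELNM} for Perelman's compact result to the non-compact setting. In that argument the gradient potential is produced as an extremal of the $\mathcal{W}$-entropy, and the gradient soliton equation is extracted from the Euler--Lagrange equation by a pointwise Bochner-type identity integrated against the weighted measure $u\,dV:=(4\pi\tau)^{-m/2}e^{-f}dV$. All one needs to add to the compact case is the existence of the extremal in the non-compact setting plus the justification of the integration by parts.

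First, I invoke Theorem \ref{ExtrLSI}: the hypotheses here are exactly those of that result, so there is a smooth positive extremal $v$ for $\mathcal{L}$ with Gaussian decay $v(x)\leq C e^{-c d^{2}(x,o)}$. Letting $\tau:=1/(2\lambda_{S})$ be the scale naturally attached to the shrinker, and rewriting $v^{2}=(4\pi\tau)^{-m/2}e^{-f}$ up to a normalization constant, the function $f\in C^{\infty}(M)$ is at least quadratically large at infinity and satisfies the associated Euler--Lagrange equation
\begin{equation*}
\tau\bigl(2\Delta f-|\nabla f|^{2}+R\bigr)+f-m=\mu.
\end{equation*}

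Next, I set $T:=\mathrm{Ric}+\mathrm{Hess}(f)-\lambda_{S}g$. A sequence of Bochner-type identities applied to the EL equation, combined with the hypothesized (a priori non-gradient) soliton identity $\mathrm{Ric}+\frac12\mathcal{L}_{X}g=\lambda_{S}g$, yields a divergence-form identity whose compact-case integrated version is $\int_{M}|T|^{2}u\,dV=0$. To reproduce this in the non-compact case, I cut off against a family $\{\chi_{R}\}$ built from the distance-like function of Proposition \ref{DistLike} (whose gradient and Hessian are uniformly bounded) and pass to the limit $R\to\infty$. This works provided one controls: polynomial growth of $|\nabla f|$ and $|\mathrm{Hess}(f)|$, via standard elliptic regularity applied to the EL equation under $|\mathrm{Ric}|\leq(m-1)K$; polynomial growth of $|X|$, via the general growth estimate for the soliton vector field announced in the abstract; and the Gaussian decay of $u$, combined with Bishop--Gromov volume comparison (valid under $\mathrm{Ric}\geq-(m-1)K$). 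Together these force every bulk integral to converge absolutely and every boundary flux on $\partial\,\mathrm{supp}(\chi_{R})$ to vanish in the limit, delivering $\int_{M}|T|^{2}u\,dV\leq 0$, hence $T\equiv 0$. That is precisely the gradient shrinking soliton equation with potential $f$.

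The main obstacle is this last non-compact integration by parts. Every term in the identity must be integrable against $u\,dV$, and in particular the terms involving the a priori non-gradient soliton field $X$ must be controlled at infinity; since $X$ is inherited from the hypothesis rather than produced by the extremization, this is exactly where the advertised growth estimate for $|X|$ becomes indispensable, complementing the Gaussian decay of the extremal from Theorem \ref{ExtrLSI}. Everything else in the argument is routine once the extremal exists, has Gaussian decay, and is paired with these two quantitative controls at infinity.
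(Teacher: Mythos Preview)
Your overall strategy is correct and coincides with the paper's: produce an extremal $v$ via Theorem \ref{ExtrLSI}, set $f=-2\ln v$ (up to an additive constant), derive the divergence identity
\[
|\Ric+\Hess(f)-\lambda_S g|^2 = \operatorname{div}_f\big[i_{\nabla f - X}(\Ric+\Hess(f)-\lambda_S g)\big],
\]
and integrate against Hessian cut-offs built from Proposition \ref{DistLike}. You also correctly identify the integration by parts as the only non-compact obstacle, and the two ingredients needed to handle it (Gaussian decay of $v$ from Theorem \ref{ExtrLSI}, growth of $|X|$ from Theorem \ref{th_SolGrowth}).

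There is, however, a genuine gap in your treatment of that obstacle. You assert ``polynomial growth of $|\nabla f|$ and $|\Hess(f)|$ via standard elliptic regularity applied to the EL equation''. This is not available: the equation $2\Delta f - |\nabla f|^2 + R + 2\lambda_S f = \mathrm{const}$ involves $f$ itself, and you only know $f\geq c\,d^2-C$ from the Gaussian \emph{upper} bound on $v$; no a priori upper bound on $f$ (equivalently, no lower bound on $v$) is at hand, so local Schauder or gradient estimates do not upgrade to global polynomial growth. (Note also that Theorem \ref{th_SolGrowth} gives \emph{exponential}, not polynomial, growth of $|X|$ when $K>0$; this is harmless against Gaussian decay, but your statement should be corrected.)

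The paper circumvents this by never estimating $\nabla f$ or $\Hess(f)$ pointwise. It rewrites the boundary integrand entirely in terms of $v$, so that the weight $e^{-f}=v^2$ exactly cancels the singular $1/v$ factors coming from $\nabla f=-2\nabla v/v$ and $\Hess(f)=-2\Hess(v)/v+2\,dv\otimes dv/v^2$; see \eqref{Tv}. The resulting terms are then controlled by: (i) $\Hess(v)\in L^2(M)$, obtained from a global Calder\'on--Zygmund inequality together with $\Delta v\in L^2$ (which follows from the Euler--Lagrange equation and $v\ln v\in L^2$); (ii) the estimate $(\ln v)\nabla v\in L^2$, proved by a further integration by parts using the equation; (iii) the fact that the Hessian cut-offs satisfy $\|\nabla\chi_n\|_\infty\to 0$ and $\|\Hess(\chi_n)\|_\infty\to 0$; and (iv) $|Xv|\in L^2$ from Theorem \ref{th_SolGrowth} combined with the Gaussian decay and Bishop--Gromov. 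These four points constitute the substantive analytic work, and your sketch does not supply a substitute for them.
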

\medskip

In order to prove Theorem \ref{th_RicSol} one would like to mimic the computations done in the compact case in \cite{ELNM}. Since we are in the non-compact setting, aiming to justify the integration by parts (see \eqref{Tv}) using a Stokes' theorem \`a la Gaffney-Karp we need a control on the growth at infinity of the soliton field $X$. 

In this regard, it is well known that the growth of the radial part of $X$ can be controlled when the Ricci curvature is bounded. Indeed if $\Ric$ is bounded from below, it is not difficult to see that $\left\langle X,\nabla r\right\rangle$ has at most linear growth. Moreover, it was proven by A. Naber, \cite{N} that if $\Ric$ is bounded from above then $\left\langle X,\nabla r\right\rangle$ grows at least linearly. Here $r$ is the distance function from a fixed reference origin. Note also that for gradient shrinking Ricci solitons it was proven in \cite{Z} that the whole $|X|$ grows at most linearly. To the best of our knowledge a growth estimate on the whole $|X|$ in the non-gradient case is not known so far. Actually in this setting we are not able to get such a linear bound for $|X|$, yet we can prove that the field can not grow much more than exponentially, which is in fact enough to our purposes. Note that the following result also concerns steady and expanding Ricci solitons.

\begin{theorem}\label{th_SolGrowth}
Let $(M^m,g)$ be a complete non-compact $m$-dimensional Ricci soliton satisfying \eqref{RS} for some $X\in\mathcal{X}(M)$ and $\lambda_{S}\in\mathbb{R}$. Suppose that $\left|\Ric\right| \leq (m-1)K$ for some constant $K\geq 0$. For any reference point $o\in M$ there exists a positive constant $C>0$, depending on $m$, $K$, $\lambda_S$, $\vol(B_1(o))$ and on
\begin{equation*}
X^\ast:=\max_{y\in\overline{B_1(o)}}|X(y)|,
\end{equation*}
such that for all $q\in M$ it holds
\begin{equation*}
|X|(q)\leq \begin{cases}
C d(q,o)^{m}&\,\textrm{if}\,\,K=0\\
C d(q,o)e^{(m-1)\sqrt Kd(q,o)}&\ \textrm{if}\,\,K>0.
\end{cases}
\end{equation*}
\end{theorem}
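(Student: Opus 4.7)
The plan is to derive a linear elliptic PDE for the soliton vector field $X$, extract local estimates from it, and then propagate a pointwise bound from the initial ball $B_1(o)$ to arbitrary distances via an integral Gronwall-type argument.

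\smallskip

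\emph{Step 1 (the equation $\Delta X=\mathrm{Ric}(X)$).} Starting from the soliton identity $\nabla_iX_j+\nabla_jX_i=2\lambda_{S}g_{ij}-2\mathrm{Ric}_{ij}$, one cyclically permutes the indices in one further covariant derivative, applies the Ricci identity $[\nabla_a,\nabla_b]X_c=-R_{abc}{}^{d}X_d$ and the twice-contracted second Bianchi identity $\nabla^i\mathrm{Ric}_{ij}=\tfrac12\nabla_jR$, exactly as in the Killing-field computation. The covariant terms in $\mathrm{Ric}$ cancel upon tracing, giving the rough-Laplacian identity $\Delta X=\mathrm{Ric}(X)$. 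Plugging this into the Bochner formula $\tfrac12\Delta|X|^2=|\nabla X|^2+\langle\Delta X,X\rangle$ yields, under $|\mathrm{Ric}|\le(m-1)K$, the subsolution inequality $\Delta|X|^2+2(m-1)K|X|^2\geq 0$.

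\smallskip

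\emph{Step 2 (local estimates).} Since $|X|^2$ is a non-negative subsolution of $\Delta u+2(m-1)Ku\geq 0$ and Bishop--Gromov holds under $|\mathrm{Ric}|\le (m-1)K$, a Moser iteration on unit balls produces an $L^\infty$--$L^2$ mean-value bound whose constant is controlled by the volume-growth rate $(m-1)\sqrt K$. Coupled with a Caccioppoli estimate for the linear system $\Delta X=\mathrm{Ric}(X)$, this upgrades to an interior gradient estimate of the form $|\nabla X|(p)\leq C_{m,K}\sup_{B_1(p)}|X|$.

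\smallskip

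\emph{Step 3 (ODE along geodesics and Gronwall).} Fix $q\in M$, set $r=d(q,o)$, and choose a minimising unit-speed geodesic $\gamma:[0,r]\to M$ from $o$ to $q$. Writing $M(s):=\sup_{\overline{B_s(o)}}|X|$, the triangle inequality $B_1(\gamma(t))\subset B_{t+1}(o)$ and Step~2 give $|\tfrac{d}{dt}|X|(\gamma(t))|\leq |\nabla X|(\gamma(t))\leq C_{m,K}M(t+1)$. Integration and taking the supremum over $q\in \partial B_r(o)$ produce the shifted Volterra inequality
\[
M(r)\leq X^{\ast}+C_{m,K}\int_0^rM(t+1)\,dt,
\]
and an iterative Gronwall-type argument, closed by using Bishop--Gromov to absorb the $K$-dependence into the exponential rate, yields the claimed bound $M(r)\leq Cre^{(m-1)\sqrt K r}$ for $K>0$ (respectively $Cr^m$ for $K=0$), with $C=C(m,K,\lambda_{S},X^{\ast})$.

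\smallskip

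\emph{Main obstacle.} The delicate point is the closure of Step~3: the constant $C_{m,K}$ delivered by the Moser iteration is not small and one cannot afford a pure Gronwall argument. The right way to obtain precisely the exponent $(m-1)\sqrt K$ is to perform the iteration on the scale of the volume comparison function, or equivalently via a chain-of-balls argument where the number of unit balls needed to reach $\partial B_r(o)$ is $O(r)$ and each link contributes a factor dictated by the Bishop--Gromov rate.
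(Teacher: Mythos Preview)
Your approach differs fundamentally from the paper's and contains a gap you yourself flag but do not close. The Gronwall/chain-of-balls argument in Step~3 cannot deliver the stated growth rates: iterating the gradient estimate along $O(r)$ unit balls multiplies by the Moser constant $C_{m,K}$ at each step, producing $e^{(\log C_{m,K})r}$, and nothing forces $\log C_{m,K}=(m-1)\sqrt K$. For $K=0$ this is decisive --- your integral inequality $M(r)\le X^{\ast}+C_{m,0}\int_0^r M(t+1)\,dt$ still yields exponential growth, never the polynomial bound $Cr^m$. The assertion that ``each link contributes a factor dictated by the Bishop--Gromov rate'' is unjustified: volume comparison enters Moser iteration only to make the constant uniform, not to prescribe its value. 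There is also a secondary problem in Step~2: the \emph{pointwise} gradient bound $|\nabla X|(p)\le C_{m,K}\sup_{B_1(p)}|X|$ for the system $\Delta X=\pm\Ric(X)$ requires, through the Bochner formula for $|\nabla X|^2$, a bound on the full Riemann tensor (the commutator term is $\mathrm{Rm}\ast\nabla X\ast\nabla X$), or else harmonic coordinates and hence a positive injectivity radius. The theorem assumes neither.

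The paper bypasses all second-order PDE machinery by exploiting the soliton equation at first order. Along any unit-speed geodesic $\gamma$ one has $\frac{d}{dt}\,g(\dot\gamma,X\circ\gamma)=(\lambda_S g-\Ric)(\dot\gamma,\dot\gamma)$, so the \emph{tangential} component $g(\dot\gamma,X)$ grows at most linearly with rate $|\lambda_S|+(m-1)K$. To pass from this to the full $|X|(q)$ the paper proves a \emph{Ricci Hinge Lemma}: by the Brunn--Minkowski inequality under $\Ric\ge -(m-1)K$, for every $W\in S_qM$ there exists $Z\in S_qM$ with $\exp_q(sZ)\in B_1(o)$ for some $s\in[r-1,r+1]$ and $|g(Z,W)|\gtrsim e^{-(m-1)\sqrt K r}$ (respectively $r^{1-m}$ when $K=0$). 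Choosing $W=X(q)/|X(q)|$ and combining with the linear tangential bound along $\gamma_Z$ gives $|X|(q)\,e^{-(m-1)\sqrt K r}\lesssim X^{\ast}+r$, which is exactly the claim. Here the exponent $(m-1)\sqrt K$ and the polynomial power $m-1$ arise transparently from the volume distortion factor $\sinh^{m-1}(\sqrt K r)$ in Brunn--Minkowski, not from any iteration constant.
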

This result relies on a Toponogov's type estimate of independent interest, which we call Ricci Hinge Lemma. Beyond the proof of Theorem \ref{th_SolGrowth}, this estimate applies more generally to control the growth of any vector field $X$ along which one can control $\mathcal{L}_Xg$, such as for instance Killing vector fields; see Corollary \ref{coro_kill} below. We are not aware of previous results in this direction.

The paper is organized as follows. In Section \ref{DefNot} we introduce some notation and basic definitions. Section \ref{PrelRes} and Section \ref{ThExtr1} are devoted to the proof of the first part of Theorem \ref{ExtrLSI} that is the existence of the extremal. In Section \ref{AppA} we present a proof of the construction of distance-like functions on complete non-compact manifolds with bounded Ricci curvature and a control on the injectivity radius, and we deduce the existence on these manifolds of Hessian cut-off functions. From these results the second part of Theorem \ref{ExtrLSI} immediately follows, as shown in Section \ref{ThExtrII}. We end the paper with Section \ref{GRSRS} which finally deals with Ricci soliton structures. In a first part we prove the Ricci Hinge Lemma, from which the general result on the growth of the soliton field on manifolds with bounded Ricci curvature can be deduced, while in the second part we deal with the integration by parts which permits to conclude the proof of Theorem \ref{th_RicSol} .

\section{Basic definitions and notation}\label{DefNot}

Let $(M^m, g)$, be a complete connected Riemannian manifold of dimension $m\geq3$ and denote by $R$ its scalar curvature and by $d\rm{vol}$ the Riemannian volume measure.\\

The \textbf{$\mathcal{W}$-entropy} is defined for $f\in W^{1,2}(M, e^{-f}d\rm{vol})$ as
\[
\ \mathcal{W}(g,f,\tau):=\int_M\left[\tau(\left|\nabla f\right|^2 +R)+f-m\right](4\pi\tau)^{-\frac{m}{2}}e^{-f}d\rm{vol},
\]
where $\tau>0$ is a scale parameter and we ask $R\in L^{1}(M, e^{-f}d\mathrm{vol})$. 
\medskip

Setting $v^2=(4\pi\tau)^{-\frac{m}{2}}e^{-f}$, we may rewrite the $\mathcal{W}$-entropy for $v\in W^{1,2}(M)$ as follows  
\[
\ \mathcal{K}(g,v,\tau)=\int_M \left(\tau(4|\nabla v|^2+R v^2)-v^2\ln v^2-mv^2 -\frac{m}{2}\ln(4\pi\tau)v^2\right)d\mathrm{vol}.
\]
The problem of minimizing the $\mathcal{W}$-entropy under the constraint
\[
\ f\in \left\{f\in C_{c}^{\infty}(M) \,\,\mathrm{s.t.}\, \int_M(4\pi\tau)^{-\frac{m}{2}}e^{-f}d\mathrm{vol}=1\right\}
\] 
is hence equivalent to the problem of minimizing $\mathcal{K}(g, v, \tau)$ under the constraint
\[
\ v\in\mathcal{U}=\left\{v\in C_{c}^{\infty}(M) \,\,\mathrm{s.t.}\, \int_M v^2d\mathrm{vol}=1 \right\}.
\] 
\medskip
Let $c>0$ be a positive constant, then the functional $\mathcal{K}$ has the following scale invariance property
\[
\ \mathcal{K}\left(cg,c^{-\frac{m}{4}}v, c\tau\right)=\mathcal{K}(g,v,\tau).
\]
Hence we can restrict the study, without loss of generality, to the case $\tau=1$. Note that, if $\left\|v\right\|_{L^{2}(M)}=1$, then
\begin{eqnarray*}
\ \mathcal{K}(g,v,1)&=&\int_{M}\left[(4|\nabla v|^2+Rv^{2})-v^{2}\ln(v^2)\right]d\mathrm{vol}-\frac{m}{2}(\ln 4\pi)-m\\
&=&\mathcal{L}(v,g)-\frac{m}{2}(\ln 4\pi)-m.
\end{eqnarray*}
Here $\mathcal{L}(v,g)=\mathcal{L}(v,M,g)$ is the \textbf{Log Sobolev functional} on $(M,g)$ perturbed by the scalar curvature of the manifold, which is defined for $v\in W^{1,2}(M)$ as
\begin{equation}\label{LS_funct}
\ \mathcal{L}(v,M, g):=\int_{M}\left(4|\nabla v|^2+Rv^2-v^{2}\ln v^{2}\right)d\mathrm{vol}.
\end{equation}
We define the \textbf{best Log Sobolev constant of a domain $\Omega\subset M$} as
\begin{equation}\label{bls_const}
\ \lambda(\Omega)=\inf\left\{\int_{\Omega}\left[4|\nabla v|^2+Rv^{2}-v^{2}\ln v^{2}\right]d\mathrm{vol}\,\,\mathrm{s.t.}\,\,v\in C_{c}^{\infty}(\Omega); \,\left\|v\right\|_{L^{2}(\Omega)=1}\right\}.
\end{equation}
When $\Omega=M$, we will denote by $\lambda:=\lambda(M)$ the \textbf{best Log Sobolev constant of $(M, g)$}. \\
The \textbf{best Log Sobolev constant at infinity of $(M, g)$} is the quantity
\begin{equation}\label{bls_constinf}
\lambda_{\infty}:=\liminf_{r\to\infty}\lambda(M\setminus B_{r}(o)).
\end{equation}
\medskip

We now give the following
\begin{definition}
Suppose that $\lambda>-\infty$. A function $v\in W^{1,2}(M)$ is called an \textbf{extremal of the Log-Sobolev functional $\mathcal{L}$} on $(M,g)$, if $\left\|v\right\|_{L^{2}(M)}=1$ and
\[
\ \int_{M}\left(4|\nabla v|^2+Rv^{2}-v^{2}\ln v^{2}\right)d\mathrm{vol}=\lambda.
\]
\end{definition}

It is worthwhile to note that, according to this definition, extremals need not to belong to $C_{c}^{\infty}(M)$, and hence to the class of functions $\mathcal U$ we are minimizing in.

The \textbf{Euler-Lagrange equation} for the Log Sobolev functional $\mathcal{L}$ is given by
\begin{equation}\label{EL}
4\Delta v-Rv+2v\ln v+\lambda v =0 
\end{equation}

Here, and from the point onward, we implicitly assume that $v\geq 0$ when $\ln v$ appears and that $v\ln v(x)=0$ when $v(x)=0$.

\section{Some preliminary results}\label{PrelRes}

An analysis of the proof of Lemma 2.1 in \cite{ZHQ} gives the validity of the following 
\begin{lemma}\label{2.1}
Let $(M^{m}, g)$ be a complete Riemannian  manifold such that
\begin{equation*}
\mathrm{Ric}\geq -(m-1)K\qquad\mathrm{and}\qquad\mathrm{inj}_{(M,g)}\geq i_{0}>0.
\end{equation*}
Let $x\in M$ and suppose $v$ is a bounded solution to \eqref{EL} in the ball $B_2(x)\subset M$ such that $\left\|v\right\|_{L^{2}(B_2(x))}\leq 1$. Then the following mean value type inequalities hold.
\begin{itemize}
	\item[(a)] 
There exists a positive constant $C=C(m,K, i_{0}, \lambda)$ such that
\[
\ \sup_{B_1(x)}v^{2}\leq C\int_{B_2(x)}v^{2}d\mathrm{vol}.
\]
\item[(b)] There exists a positive constant $C=C(m,K, i_{0}, \lambda, \sup_{B_{1}(x)}|\nabla R|)$ such that
\[
\ \sup_{B_{1/2}(x)}|\nabla v|^{2}\leq C\int_{B_1(x)}v^{2}d\mathrm{vol}.
\]
\end{itemize}
\end{lemma}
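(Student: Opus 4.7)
The plan is a classical Moser iteration adapted to \eqref{EL}, leveraging the uniform local Sobolev inequality valid under the curvature and injectivity-radius hypotheses. The key idea is to use the equation itself to trade the non-polynomial nonlinearity $v\ln v$ for a subquadratic polynomial term on which a standard iteration operates.

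For part (a), I would fix $p\geq 1$, pick a cutoff $\eta\in C_c^\infty(B_2(x))$, and test $4\Delta v=Rv-2v\ln v-\lambda v$ against $\eta^2 v^{2p-1}$. After integration by parts, using $R\geq -m(m-1)K$ from the Ricci lower bound, Young on the cross term $\int \eta(\nabla\eta\cdot\nabla v)v^{2p-1}$, and the elementary bound $v\ln v\leq \alpha^{-1}v^{1+\alpha}$ valid for any $\alpha>0$ and any $v\geq 0$, one obtains a Caccioppoli-type inequality of the form
\begin{equation*}
\int |\nabla(\eta v^p)|^2\,d\mathrm{vol} \leq C(p,\alpha)\int (\eta^2+|\nabla\eta|^2)\bigl(v^{2p}+v^{2p+\alpha}\bigr)\,d\mathrm{vol}.
\end{equation*}
The higher-power term is controlled by the boundedness of $v$ together with interpolation between $L^{2p}$ and $L^{2p\kappa}$, where $\kappa=m/(m-2)$. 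Inserting this into the local Sobolev inequality applied to $\eta v^p$ and iterating over dyadic cutoffs between $B_2(x)$ and $B_1(x)$ with exponents $p_k=\kappa^k$ produces, in the standard Moser fashion, $\sup_{B_1(x)}v^2\leq C\int_{B_2(x)} v^2\,d\mathrm{vol}$; the dependence of the constants on $\|v\|_{L^\infty}$ that enters through the $\alpha$-trick is eliminated via a bootstrap using the normalisation $\|v\|_{L^2(B_2)}\leq 1$.

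For part (b), I would first use (a) to bound $v$ on $B_1(x)$. With $w=|\nabla v|^2$, the Bochner formula gives
\begin{equation*}
\tfrac{1}{2}\Delta w=|\nabla^2 v|^2+\langle\nabla v,\nabla\Delta v\rangle+\mathrm{Ric}(\nabla v,\nabla v),
\end{equation*}
and differentiating \eqref{EL} yields $4\nabla\Delta v=v\nabla R+(R-2\ln v-2-\lambda)\nabla v$, so
\begin{equation*}
\tfrac{1}{2}\Delta w \geq \tfrac{1}{4}(R-2\ln v-2-\lambda)\,w + \tfrac{1}{4}v\langle\nabla v,\nabla R\rangle - (m-1)K\,w.
\end{equation*}
On $B_1(x)$ the coefficient of $w$ is bounded below: $-2\ln v\geq 0$ when $v\leq 1$, and $-2\ln v\geq -2\ln \sup_{B_1(x)} v$ when $v\geq 1$, with $\sup v$ controlled by (a). Combined with the hypothesis on $|\nabla R|$ and a Young-type absorption of the gradient cross term, this produces $\Delta w\geq -Aw-Bv^2$ with constants depending only on the listed parameters. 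A second Moser iteration on $w$ over balls between $B_1(x)$ and $B_{1/2}(x)$ gives $\sup_{B_{1/2}(x)}w\leq C\int_{B_1(x)}w\,d\mathrm{vol}$, and composing with the elementary Caccioppoli estimate $\int_{B_1(x)}|\nabla v|^2\leq C\int_{B_{3/2}(x)}v^2$ (obtained by testing \eqref{EL} against $\eta^2 v$) together with part (a) closes the argument.

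The main obstacle is the logarithmic term $v\ln v$, which is neither bounded nor a fixed power of $v$: at each iteration step one risks picking up unwanted dependence on $\|v\|_{L^\infty}$, and the delicate point is to channel this dependence back through the $L^2$ normalisation by a bootstrap so that the final constant depends only on $m,K,i_0,\lambda$ as required. In part (b) the fact that $-\ln v$ can blow up where $v$ is small turns out to be harmless thanks to its favourable sign in the Bochner-type inequality for $w$; what is genuinely needed there is the additional hypothesis on $\sup|\nabla R|$, which appears unavoidably when differentiating \eqref{EL}.
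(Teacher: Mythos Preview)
The paper does not actually give a proof of this lemma: it simply states that ``an analysis of the proof of Lemma 2.1 in \cite{ZHQ}'' yields the result under the present hypotheses, the point being that Zhang's argument only uses the local Sobolev inequality and Bishop--Gromov comparison, both of which hold under $\mathrm{Ric}\geq -(m-1)K$ and $\mathrm{inj}\geq i_0$. Your outline is precisely the Moser iteration carried out in \cite{ZHQ}: for (a), testing \eqref{EL} against $\eta^2 v^{2p-1}$, absorbing the logarithmic nonlinearity via an $\epsilon$-small power of $v$ and iterating with the Sobolev inequality; for (b), the Bochner formula combined with the differentiated equation and a second Moser iteration on $|\nabla v|^2$, where the favourable sign of $-\ln v$ for small $v$ and the bound from (a) for large $v$ control the zeroth-order coefficient.

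One point you leave somewhat implicit deserves a word of caution. In part (a) the phrase ``the dependence of the constants on $\|v\|_{L^\infty}$ \dots\ is eliminated via a bootstrap'' is the genuinely delicate step. The clean way to do this (and what Zhang does) is not to invoke boundedness of $v$ at all in the iteration: one uses $\ln t\leq \epsilon\, t^{4/(m-2)}+C_\epsilon$ on $\{v\geq 1\}$, so that the bad term $\int \eta^2 v^{2p}\ln v$ is bounded by $\epsilon\int(\eta v^p)^{2\kappa}+C_\epsilon\int\eta^2 v^{2p}$ (the inequality $v^{2p+4/(m-2)}\leq v^{2p\kappa}$ holding on $\{v\geq 1\}$ for every $p\geq 1$). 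The $\epsilon$-term is then absorbed directly into the left-hand side of the Sobolev inequality, and no a priori $L^\infty$ bound on $v$ ever enters. Your interpolation-plus-bootstrap formulation can be made to work, but as written it risks circularity; the $\epsilon$-absorption route is both cleaner and closer to the cited source.
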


The content of the following lemma is that in our assumptions the variational problem for the Log Sobolev functional $\mathcal{L}$ is well defined. The proof is standard and follows from a combination of Sobolev's and Jensen's inequality. We present it here for the sake of completeness.
 
\begin{lemma}
Let $(M^{m}, g)$ be a complete Riemannian  manifold such that
\begin{equation*}
\mathrm{Ric}\geq -(m-1)K\qquad\mathrm{and}\qquad\mathrm{inj}_{(M,g)}\geq i_{0}>0.
\end{equation*}
Then $\lambda$ is finite.
\end{lemma}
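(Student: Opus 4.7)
The functional has three terms that must be controlled. The scalar curvature term is easy: tracing the hypothesis $\mathrm{Ric}\geq -(m-1)K$ gives $R\geq -m(m-1)K$, so $\int_M R v^{2} d\mathrm{vol}\geq -m(m-1)K$ whenever $\|v\|_{L^2}=1$. The Dirichlet term $4\int_M|\nabla v|^2 d\mathrm{vol}$ is non-negative and will serve as the reservoir of positivity. All the difficulty is in the entropy term $-\int_M v^2 \ln v^2 d\mathrm{vol}$, which has no definite sign; the plan is to bound it below by a small multiple of the Dirichlet energy.

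The plan is a standard Jensen--Sobolev argument. Since $\|v\|_{L^2}=1$, the measure $v^2 d\mathrm{vol}$ is a probability measure on $\mathrm{supp}(v)$. Applying Jensen's inequality to the concave function $\ln$ and the quantity $v^{2^{\ast}-2}$, where $2^{\ast}=2m/(m-2)$ is the Sobolev exponent (recall $m\geq 3$), and rearranging using $2^\ast-2=4/(m-2)$ yields
\begin{equation*}
\int_M v^2 \ln v^2\, d\mathrm{vol} \;\leq\; \frac{m}{2}\ln \|v\|_{L^{2^{\ast}}}^{\,2}.
\end{equation*}
Under hypothesis \eqref{Our}, the combination of the Ricci lower bound and the positive injectivity radius is classically known to imply a Euclidean-type Sobolev inequality
\begin{equation*}
\|v\|_{L^{2^{\ast}}}^{\,2}\;\leq\; A\|\nabla v\|_{L^2}^{2}+B\|v\|_{L^2}^{2}
\end{equation*}
with constants $A,B>0$ depending only on $m,K,i_0$ (see e.g.\ Hebey's monograph). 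Chaining these two displays and using the sublinear growth of $\ln$, namely $\ln(At+B)\leq \tfrac{4}{m}t+C_0$ for all $t\geq 0$ and a suitable $C_0=C_0(m,A,B)$, produces
\begin{equation*}
-\int_M v^2\ln v^2\, d\mathrm{vol}\;\geq\; -2\|\nabla v\|_{L^2}^{2}-\frac{m}{2}C_0.
\end{equation*}

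Summing the three bounds gives, independently of $v$,
\begin{equation*}
\mathcal{L}(v)\;\geq\; 2\|\nabla v\|_{L^2}^{2}-m(m-1)K-\tfrac{m}{2}C_0\;\geq\; -m(m-1)K-\tfrac{m}{2}C_0,
\end{equation*}
so $\lambda>-\infty$. I do not foresee any real obstacle: the only ingredient that is not a one-line manipulation is the Sobolev embedding under \eqref{Our}, which is standard, and the rest is essentially bookkeeping on the coefficients so that $2\|\nabla v\|_{L^2}^2$ survives after absorbing the logarithmic term into the Dirichlet energy.
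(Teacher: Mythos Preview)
Your proposal is correct and follows essentially the same route as the paper: Jensen's inequality against the probability measure $v^2\,d\mathrm{vol}$ combined with the Sobolev inequality valid under \eqref{Our} to control $\int v^2\ln v^2$, then absorbing the resulting logarithmic term into the Dirichlet energy. The only cosmetic difference is in the final step: the paper packages the conclusion as the observation that $\Phi(t)=4(t^2-1)-\frac{m}{2}\ln(C_M t^2)$ is bounded below, whereas you make this explicit via the linear bound $\ln(At+B)\le \frac{4}{m}t+C_0$, which is exactly why $\Phi$ is bounded below.
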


\begin{proof}
Under our assumptions, it is well-known that the following Sobolev inequality holds for all $u\in C_{c}^{\infty}(M)$
\begin{equation}\label{Sob}
\int_M u^{2^*}d\mathrm{vol}\leq \left(C_M\int_M\left(|\nabla u|^2+ u^2\right)\,d\mathrm{vol}\right)^{\frac{m}{m-2}},
\end{equation}
for some constant $C_M$ depending only on $(M, g)$.

Since $\ln$ is concave and $\left\|u\right\|_{L^{2}(M)}=1$, by Jensen's inequality we obtain
\begin{equation*}
\int_M u^2 \ln u^2d\mathrm{vol}\leq\frac{m-2}{2}\ln\left(\int_M u^{2^{*}}\,d\mathrm{vol}\right).
\end{equation*}
Using \eqref{Sob}, we thus get that
\begin{equation*}
\ln \left(\int_M u^{2^*}d\mathrm{vol}\right)\leq\frac{m}{m-2}\ln\left(C_M\int_M\left(|\nabla u|^2+ u^2\right)d\mathrm{vol}\right),
\end{equation*}
from which
\begin{equation}\label{Rot1}
\int_Mu^2\ln u^2d\mathrm{vol}\leq \frac{m}{2}\ln\left(C_M\int_M\left(|\nabla u|^2+u^2\right)d\mathrm{vol}\right).
\end{equation}
Using \eqref{Rot1} we observe now that
\begin{eqnarray*}
&&\,\,\,\,\,\int_M \left(4|\nabla u|^2-u^2\ln u^2\right)d\mathrm{vol}\\&&\geq \int_M4 (|\nabla u|^2+u^2)d\mathrm{vol} -4\int_Mu^2d\mathrm{vol}-\frac{m}{2}\ln\left(C_M\int_M(|\nabla u|^2+u^2)d\mathrm{vol}\right)\\
&&\geq 4\left[\int_M(|\nabla u|^2+u^2)d\mathrm{vol}-1\right]-\frac{m}{2}\ln \left(C_M\int_M(|\nabla u|^2+u^2)d\mathrm{vol}\right)\\
&&=\Phi(\left\|u\right\|_{W^{1,2}(M)}), 
\end{eqnarray*}
where $\Phi:\mathbb{R}\to\mathbb{R}$ is defined by $\Phi(t)=4(t^2-1)-\frac{m}{2}\ln (C_M t^2)$. Since $\Phi$ is bounded from below, we obtain that for any $u\in \mathcal{U}$
\[
\mathcal{L}(v, M, g)\geq  \inf_{M}R+ C,
\]
for some constant $C\in \mathbb{R}$, and hence $\lambda$ is finite.
\end{proof}

\section{Proof of Theorem \ref{ExtrLSI}: first part}\label{ThExtr1}

In this section we present the proof of the first assertion of Theorem \ref{ExtrLSI}, that is the existence of the extremal.
\medskip

By Proposition 2.1 in \cite{GW} we know that, given $o\in M$, there exists a $L\in C^{\infty}(M)$ with
\[
\ |L(x)-d(x, o)|\leq 1\qquad\mathrm{and}\qquad|\nabla L|(x)\leq 2\quad\mathrm{on}\quad M.
\] 
For any positive integer $k$, consider the domain
\[
\ D(o, k)=\left\{x\in M \,\mathrm{s.t.}\, L(x)<k\right\}
\]
and let $\lambda_{k}:=\lambda(D(o,k))$. Note that we can choose $L$ in such a way that $\partial D(o,k)$ is smooth for any $k\in \mathbb N$.

According to \cite{Ro}, $\lambda_{k}$ is finite and, for every $k$, there exists a  non-negative extremal on $D(o,k)$ in $W^{1,2}(D(o,k))\cap C^0(\overline{D(o,k)})$ with $\left\|v_{k}\right\|_{L^{2}(D(o,k))}=1$ which satisfies
\begin{equation*}
\begin{cases}
4\Delta v_{k}-Rv_{k}+2v_{k}\ln v_{k}+\lambda_{k}v_{k}=0, &\mathrm{in}\quad D(o, k)\\
v_{k}=0,&\mathrm{on}\quad\partial D(o,k)
\end{cases}
\end{equation*}
We extend $v_{k}$ to $M$ by setting it equal to $0$ on $M\setminus D(o,k)$. Hence $v_{k}\in C^{\infty}(\mathrm{int}(D(o,k)))\cap C^{0}(\overline{D(o,k)})$, $v_{k}\in W^{1, 2}(M)$ with $\left\|v_{k}\right\|_{W^{1,2}(M)}=\left\|v_{k}\right\|_{W^{1,2}(D(o,k))}$, and $\left\|v_{k}\right\|_{L^{2}(M)}=1$. Note also that, by definition, $\mathcal{L}(v_{k}, M, g)=\lambda_{k}\searrow\lambda$.

We claim that, under our assumptions, $\left\{v_{k}\right\}$ is uniformly bounded in $W^{1,2}(M)$.

Indeed, by Jensen's and Sobolev's inequalities we have that
\[
\ \int_{M}v_{k}^{2}\ln{v_{k}^{2}}d\mathrm{vol}\leq \frac{m}{2}\ln\left(A\int_{M}|\nabla v_k|^2d\mathrm{vol}+B\right),
\] 
for some constants $A, B\,\in\mathbb{R}$. Hence, since for every $\sigma\geq 0$, $t>0$, it holds that $\ln(t)\leq \sigma t -1 -\ln\sigma$, we deduce
\begin{equation*}
\int_{M}v_{k}^{2}\ln v_{k}^2d\mathrm{vol}\leq\frac{m}{2}\,\sigma A\int_{M}|\nabla v_k|^2d\mathrm{vol}+\frac{m}{2}\,\sigma B-\frac{m}{2}-\frac{m}{2}\ln \sigma.
\end{equation*}
Choosing $\sigma$ small enough we get that
\begin{align*}
\lambda_{k}=&\mathcal{L}(v_{k}, M, g)=\int_{M}\left(4|\nabla v_{k}|+Rv_{k}^2-v_{k}^2\ln v_{k}^2\right)d\mathrm{vol}\\
\geq&\int_{M}4|\nabla v_{k}|^{2}d\mathrm{vol}+\int_{M}\inf_{M}Rv_{k}^{2}d\mathrm{vol}-\frac{m}{2}\,\sigma A\int_{M}|\nabla v_k|^2d\mathrm{vol}-\frac{m}{2}\,\sigma B+ \frac{m}{2}+\frac{m}{2}\ln\sigma\\
\geq&\left(4-\frac{m}{2}\,\sigma A\right)\int_{M}|\nabla v_{k}|^{2}d\mathrm{vol}+C(\inf_{M}R, m, B, \sigma)\\
\geq&\int_{M}|\nabla v_{k}|^{2}d\mathrm{vol}+C(\inf_{M}R, m, A, B).
\end{align*}
Hence, for some constant C,
\begin{equation*}
\int_{M}|\nabla v_{k}|^{2}d\mathrm{vol}\leq \lambda_{k}+C\leq 2|\lambda| +C,
\end{equation*}
for $k\gg 1$, as claimed.
\medskip

 Therefore, up to passing to a subsequence, there exists $v\in W^{1,2}(M)$ such that $v_{k}\to v$ weakly in $W^{1,2}(M)$. As a standard consequence, by lower semicontinuity of the $W^{1,2}$-norm, we hence have that
\[
\ \left\|v\right\|_{W^{1,2}(M)}\leq \liminf_{k\to\infty}\left\|v_{k}\right\|_{W^{1,2}(M)}.
\]
Moreover, by the uniformly boundedness and the Rellich-Kondrachov compactness theorem, $v_{k}\to v$ strongly in $L^{p}$ on compact sets for every $p\in\left(1,2^{*}\right)$ and a.e. in $M$.
In particular $v\in W^{1,2}(M)$, $v\geq 0$ a.e. in $M$ and, since $\int_{A}v_k^{2}d\mathrm{vol}\leq 1$ for every $A\subset M$ compact, the same holds for $v$, and hence $\int_{M}v^{2}d\mathrm{vol}\leq 1$. Actually, we will show in Lemma \ref{lem4.3} below that $\int_{M}v^2d\mathrm{vol}=1$.

\begin{lemma}
$v$ is strictly positive on a set of positive measure in $M$.
\end{lemma}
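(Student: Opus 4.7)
The plan is to derive a contradiction from the opposite assumption. Suppose $v=0$ a.e. on $M$. Then the claim will be that the mass of $\{v_k\}$ escapes in a sufficiently coherent way to produce test functions supported in exterior annuli whose Log Sobolev quotients tend to $\lambda$, thus forcing $\lambda(M\setminus B_r(o))\leq\lambda$ for every $r>0$ and contradicting $\lambda<\lambda_\infty$.

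First I would use the a-priori mean value bounds. Since $v_k\to 0$ strongly in $L^2_{\mathrm{loc}}(M)$ by Rellich-Kondrachov, and each $v_k$ is a bounded solution of \eqref{EL} with constant $\lambda_k\in[\lambda,\lambda_1]$ on $D(o,k)$, Lemma \ref{2.1}(a)-(b), applied as soon as $B_2(x)\subset D(o,k)$, yields that both $v_k\to 0$ and $|\nabla v_k|\to 0$ uniformly on every compact set of $M$.

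Next, I would fix $r>0$ and pick a smooth cutoff $\psi_r\in C^\infty(M)$ with $\psi_r\equiv 0$ on $B_r(o)$, $\psi_r\equiv 1$ on $M\setminus B_{r+1}(o)$ and $|\nabla\psi_r|\leq C$, and consider $w_{k,r}:=\psi_r v_k$, which is supported in $M\setminus B_r(o)$. A term-by-term comparison of $\mathcal{L}(w_{k,r},M,g)$ with $\mathcal{L}(v_k,M,g)=\lambda_k$ shows that all discrepancies are concentrated inside $B_{r+1}(o)$. Expanding $|\nabla w_{k,r}|^2$, the cross term and the $v_k^2|\nabla\psi_r|^2$ term tend to $0$ since their integrands are supported on $B_{r+1}(o)$ and $v_k, |\nabla v_k|$ vanish uniformly there; likewise $\int R v_k^2-\int R w_{k,r}^2\to 0$ because $R$ is bounded on the compact set $B_{r+1}(o)$. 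For the log term I would split $w_{k,r}^2\ln w_{k,r}^2=\psi_r^2 v_k^2\ln v_k^2+\psi_r^2 v_k^2\ln\psi_r^2$: the second summand is $o_k(1)$ since $|t\ln t|\leq e^{-1}$ on $[0,1]$ and $v_k^2\to 0$ uniformly on $B_{r+1}(o)$, while the difference $\int(1-\psi_r^2)v_k^2\ln v_k^2$ is $o_k(1)$ by the uniform convergence $v_k^2\ln v_k^2\to 0$ on $B_{r+1}(o)$, which follows from continuity of $t\mapsto t\ln t$ at the origin. The same uniform smallness also gives $\|w_{k,r}\|_{L^2}^2\to 1$.

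Renormalizing $\tilde w_{k,r}:=w_{k,r}/\|w_{k,r}\|_{L^2}$ and noting that the rescaling contributes only a factor $\ln\|w_{k,r}\|_{L^2}^2\to 0$ in the log term, one obtains $\mathcal{L}(\tilde w_{k,r},M,g)\to\lambda$ as $k\to\infty$. Since the $\tilde w_{k,r}$ are uniformly bounded $W^{1,2}_0(M\setminus B_r(o))$ functions with unit $L^2$-norm, and since $C_c^\infty(M\setminus B_r(o))$ is dense in $W^{1,2}_0(M\setminus B_r(o))$ with $\mathcal{L}$ continuous on bounded subsets of $W^{1,2}$, they yield $\lambda(M\setminus B_r(o))\leq\lambda$ for every $r>0$. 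Letting $r\to\infty$ gives $\lambda_\infty\leq\lambda$, the desired contradiction. The main obstacle is the non-continuity of the logarithmic term under weak $W^{1,2}$ convergence; the key tool to circumvent it is the $L^\infty_{\mathrm{loc}}$ convergence provided by Lemma \ref{2.1}, which upgrades the locally strong $L^2$ convergence enough to let continuity of $t\ln t$ control the non-local log contribution.
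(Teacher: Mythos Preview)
Your proof is correct and follows essentially the same approach as the paper: both assume $v\equiv 0$, use Lemma \ref{2.1} to upgrade $L^2_{\mathrm{loc}}$-convergence of $v_k$ to uniform $C^1$-convergence on compact sets, then multiply $v_k$ by a cutoff vanishing on a fixed compact set to produce competitors showing $\lambda(M\setminus B_r(o))\leq\lambda+\varepsilon$, contradicting $\lambda<\lambda_\infty$. Your treatment is in fact slightly more careful than the paper's in that you explicitly renormalize $w_{k,r}$ to unit $L^2$-norm and address the passage from $W^{1,2}_0$ to $C_c^\infty$ competitors, both of which the paper leaves implicit.
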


\begin{proof}
We reason by contradiction, assuming that $v\equiv 0$ a.e. in $M$, and get a contradiction to the assumption that $\lambda<\lambda_{\infty}$. In the following, integrals are meant with respect to $d\mathrm{vol}$ unless otherwise specified.

Consider an exhaustion $E_{i}\nearrow M$ consisting of relatively compact domains with smooth boundary. Then for every $i$ fixed, there exists a $K_{i}$ such that the tubular neighborhood $B_{2}(E_{i})\subset D(o,k)$ for every $k\geq K_{i}$. We hence consider the sequence $\left\{v_{k}\right\}_{k\geq K_{i}}$ on $E_{i}$.

We need first the following 
\medskip

\textsc{Step 1. }If $v\equiv 0$ a.e. in $M$, then, for every $E_{i}$, $v_{k}\to 0$ in $C^{1}(E_{i})$ as $k\to\infty$.
\medskip

We have that $\int_{D(o, K_{i})}v_{k}^2\,d\mathrm{vol}\to 0$ as $k\to\infty$ since $D(o, K_{i})$ is compact. 
By Lemma \ref{2.1} applied to the $v_{k}$'s we have that there exists a constant $C= C(m, K, i_{0}, \lambda, \sup_{D(o, K_{i})}|\nabla R|)$ such that, for all $x\in E_{i}$,
\begin{align}
v_{k}^{2}(x)\leq&\sup_{B_1(x)}v_{k}^{2}\leq C\int_{B_2(x)}v_{k}^2\,d\mathrm{vol}\leq C\int_{D(o, K_{i})}v_{k}^{2}\,d\mathrm{vol}\nonumber\\
|\nabla v_{k}|^2(x)\leq& \sup_{B_{1/2}(x)}|\nabla v_{k}|^2\leq C\int_{B_1(x)}v_{k}^2\,d\mathrm{vol}\leq C\int_{D(o, K_{i})}v_{k}^{2}\,d\mathrm{vol}\label{Lemma3.1_2}.
\end{align} 
Letting $\varepsilon_{k}:=C\int_{D(o, K_{i})}v_{k}^{2}\,d\mathrm{vol}$, then $\varepsilon_{k}\to 0$ as $k\to\infty$, by our assumption. 
\medskip

\textsc{Step 2.} $\forall \, i$ and $\forall\,\varepsilon>0$ there exists $u\in C^{\infty}(M\setminus E_{i-1})\cap W^{1,2}(M)$ such that $\mathrm{supp}\,u\subset\subset M\setminus E_{i-1}$ and $\mathcal{L}(u, M, g)<\lambda + \varepsilon$. 
\medskip

Fix $i$ and $\varepsilon>0$. Let $\varphi\in C^{\infty}(M)$, $0\leq\varphi\leq 1$ such that $|\nabla \varphi|$ is bounded and
\[
\ \varphi_{|M\setminus E_{i}}=1,\qquad\varphi_{|E_{i-1}}=0.
\]
For $k\geq K_{i}$ define $u_{k}:=v_{k}\varphi$. Then $u_{k}\in C^{\infty}(E_{i})\cap W^{1,2}(M)$ and
\begin{align*}
\mathcal{L}(u_{k}, M, g)=&\mathcal{L}(v_{k}, M\setminus E_{i}, g)+\int_{E_{i}}4|\nabla \varphi|^2v_{k}^2\,d\mathrm{vol}+\int_{E_{i}}8\left\langle \nabla \varphi, \nabla v_{k}\right\rangle \,d\mathrm{vol}+\int_{E_{i}}4\varphi^{2}|\nabla v_{k}|^2\,d\mathrm{vol}\\
&+\int_{E_{i}}R\varphi^2v_{k}^{2}\,d\mathrm{vol}-\int_{E_{i}}\varphi^2v_{k}^2\ln\varphi^{2}\,d\mathrm{vol}-\int_{E_{i}} \varphi^2v_{k}^2\ln v_{k}^2\,d\mathrm{vol}\\
=&\mathcal{L}(v_{k}, M\setminus E_{i}, g)+\int_{E_{i}\setminus E_{i-1}}4|\nabla \varphi|^2v_{k}^2\,d\mathrm{vol}+\int_{E_{i}\setminus E_{i-1}}8\left\langle \nabla \varphi, \nabla v_{k}\right\rangle \,d\mathrm{vol}+\int_{E_{i}}4|\nabla v_{k}|^2\,d\mathrm{vol}\\
&+\int_{E_{i}}4|\nabla v_{k}|^2(\varphi^2-1)\,d\mathrm{vol}+\int_{E_{i}}Rv_{k}^{2}\,d\mathrm{vol}+\int_{E_{i}}Rv_{k}^{2}(\varphi^2-1)\,d\mathrm{vol}-\int_{E_{i}}\varphi^2v_{k}^2\ln\varphi^{2}\,d\mathrm{vol}\\
&-\int_{E_{i}} v_{k}^2\ln v_{k}^2\,d\mathrm{vol}-\int_{E_{i}} v_{k}^2\ln v_{k}^2(\varphi^{2}-1)\,d\mathrm{vol}.
\end{align*}
Hence, for $k\gg1$ and for some new constant $C$,
\begin{align*}
\mathcal{L}(u_{k}, M, g)-\mathcal{L}(v_{k}, M, g)=&|\mathcal{L}(u_{k}, M, g)-\mathcal{L}(v_{k}, M, g)|\\
\leq&\int_{E_{i}\setminus E_{i-1}}4|\nabla \varphi|^2v_{k}^2\,d\mathrm{vol}+\int_{E_{i}\setminus E_{i-1}}8|\nabla \varphi||\nabla v_{k}|\,d\mathrm{vol}+\int_{E_{i}}4|\nabla v_{k}|^2(\varphi^2-1)\,d\mathrm{vol}\\
&+\int_{E_{i}}\left(\inf_{E_{i}}R\right)_{-}v_{k}^{2}\,d\mathrm{vol}+\int_{E_{i}}v_{k}^2|\varphi^2\ln\varphi^{2}|\,d\mathrm{vol}+\int_{E_{i}} \left|v_{k}^2\ln v_{k}^2\right|\,d\mathrm{vol}\\
\leq&C\int_{E_{i}\setminus E_{i-1}}v_{k}^{2}\,d\mathrm{vol}+C\mathrm{vol}(E_{i}\setminus E_{i-1})\sqrt{\int_{D(o, K_{i})}v_{k}^2\,d\mathrm{vol}}\\
&+\left(\inf_{E_{i}}R\right)_{-}\int_{E_{i}}v_{k}^2\,d\mathrm{vol}+e^{-1}\int_{E_{i}}v_{k}^2\,d\mathrm{vol}+\int_{E_{i}} \left|v_{k}^2\ln v_{k}^2\right|\,d\mathrm{vol},
\end{align*}
where in the last inequality we used also \eqref{Lemma3.1_2} and we are using the standard notation  $(\cdot)_{-}$ for the negative part of a function.

For $k\gg 1$, using Step 1 we can make also all the terms on the RHS less then $\frac{\varepsilon}{10}$. Hence we have obtained that there exists $\overline{k}$ such that $\forall\,k>\overline{k}$
\[
\ \mathcal{L}(u_k,M, g)-\mathcal{L}(v_{k},M, g)\leq\frac{\varepsilon}{2}
\]
In particular, for all $k>\overline{k}$ we get that
\begin{equation*}
\lambda(M\setminus E_{i-1})\leq\mathcal{L}(u_{k}, M, g)\leq\mathcal{L}(v_{k}, M, g)+\frac{\varepsilon}{2}=\lambda_{k}+\frac{\varepsilon}{2}.
\end{equation*}
Since $\lambda_{k}\searrow\lambda$ as $k\to\infty$ we also know that there exists a $\overline{\overline{k}}$ such that $\forall\,k>\overline{\overline{k}}$ we have that
\[
\ \lambda_{k}<\lambda+\frac{\varepsilon}{2}.
\]
Thus, for $k>\tilde{k}(i)=\max\left\{\overline{k}, \overline{\overline{k}}\right\}$,  we get that $\mathcal{L}(u_{k}, M, g)<\lambda+\varepsilon$, concluding the proof of Step 2. 

In particular, we deduce that
\[
\ \lambda(M\setminus E_{i-1})<\lambda+\varepsilon.
\]
\medskip

From this it follows that
\[
\ \lambda_{\infty}=\liminf_{i\to\infty}\lambda(M\setminus E_{i})\leq \lambda +\varepsilon,
\]
and, since by the assumption $\lambda<\lambda_{\infty}$ we can choose $0<\varepsilon<\lambda_{\infty}-\lambda$, this leads to the desired contradiction and hence to the validity of the lemma. 
\end{proof}
\medskip

In particular we have obtained that there exists a $J$ such that $\forall\,j\geq J$
\begin{equation}\label{nonzero}
\int_{E_{j}}v^{2}\,d\mathrm{vol}\geq\delta>0.
\end{equation}

By Lemma \ref{2.1} applied to the $v_{k}$ we know also that $\left\{v_{k}\right\}_{k>K_{j}}$ is bounded in $C^{1}(E_{j})$. Hence, up to a subsequence $v_{k}\to v$ on $E_{j}$ in $C^{0,\alpha}$, for $0<\alpha<1$, and $v\in C^{0,\alpha}(E_{j})$.

By \eqref{nonzero}, we know that there exists $x\in E_{j}$ such that $v(x)>0$. Hence $0\leq v\in C^{0,\alpha}_{\mathrm{loc}}(M)$ and there exists $x\in M$ such that $v(x)>0$.

Note that $v$ is bounded in $M$. In fact, for any fixed $x\in M$, we have that $x\in E_{j}$ for $j\gg1$. Consider $\left\{v_{k}\right\}_{k>K_{j}}$ on $E_{j}$. Since by Lemma \ref{2.1} we have that $v_{k}\leq C$, for every $\varepsilon>0$
\begin{equation}\label{bdd_v}
|v(x)|\leq |v(x)-v_{k}(x)|+ |v_{k}(x)|\leq C+\varepsilon,
\end{equation}
for $k$ large enough.

\begin{lemma}\label{WeakSol}
$v$ is a weak solution of \eqref{EL}
\end{lemma}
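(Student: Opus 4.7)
The plan is to pass to the limit in the weak formulation of the Euler--Lagrange equation satisfied by each $v_k$. Fix a test function $\varphi\in C_c^\infty(M)$ and pick a relatively compact open set $E\supset\mathrm{supp}\,\varphi$. Since $D(o,k)\nearrow M$, for $k$ large enough $B_2(E)\subset D(o,k)$, so multiplying the pointwise equation for $v_k$ by $\varphi$ and integrating by parts yields
\begin{equation*}
\int_M \bigl(-4\langle\nabla v_k,\nabla\varphi\rangle - R v_k\varphi + 2 v_k\ln v_k\,\varphi + \lambda_k v_k\varphi\bigr)\,d\mathrm{vol} = 0.
\end{equation*}
The task is then to send $k\to\infty$ in each of the four terms and recover the weak form of \eqref{EL}.

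Three of the four terms are routine. The gradient term passes by the weak convergence $v_k\rightharpoonup v$ in $W^{1,2}(M)$. The scalar curvature term passes because $R$ is smooth (hence bounded) on $\mathrm{supp}\,\varphi$ and $v_k\to v$ strongly in $L^2(E)$ by Rellich--Kondrachov. The same strong convergence together with $\lambda_k\searrow\lambda$ handles $\int\lambda_k v_k\varphi\to\int\lambda v\varphi$.

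The only delicate point, and the main obstacle, is the logarithmic term, since $t\mapsto t\ln t$ is not Lipschitz near the origin and $v_k$ could, a priori, be unbounded. Here the elliptic regularity estimates in Lemma \ref{2.1} become crucial. Applying Lemma \ref{2.1}(a) at every $x\in E$, valid for $k$ so large that $B_2(E)\subset D(o,k)$ and using $\|v_k\|_{L^2(M)}\le 1$, yields a uniform $L^\infty$ bound $\sup_E v_k\le C$ with $C$ independent of $k$. Likewise Lemma \ref{2.1}(b) provides a uniform $C^1$ bound on $E$, so, along the subsequence already extracted for the weak limit, Arzel\`a--Ascoli (consistent with the $C^{0,\alpha}$ convergence noted just before the lemma) upgrades the convergence $v_k\to v$ to uniform convergence on $E$. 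Since $t\mapsto t\ln t$ is uniformly continuous on $[0,C]$, it follows that $v_k\ln v_k\to v\ln v$ uniformly on $E$, hence $\int v_k\ln v_k\,\varphi\to \int v\ln v\,\varphi$. Adding up the four limits in the displayed identity proves that $v$ weakly satisfies \eqref{EL}, as $\varphi$ is arbitrary.
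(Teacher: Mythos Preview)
Your proof is correct and takes a genuinely simpler route than the paper for the logarithmic term. Both arguments handle the gradient, scalar-curvature, and $\lambda_k$ terms identically, and both ultimately rest on the uniform local $L^\infty$ bound for $v_k$ coming from Lemma~\ref{2.1}(a). The difference lies in how that bound is exploited.

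The paper treats the logarithmic term by an $\varepsilon$-regularization, splitting $\int (v_k\ln v_k - v\ln v)\varphi$ into four pieces involving $\ln(v_k+\varepsilon)$ and $\ln(v+\varepsilon)$, and estimating each piece via the $L^2$ convergence on compacta together with the uniform bound. You instead invoke the $C^{0,\alpha}$ convergence $v_k\to v$ on $E$ (which the paper had already established just before this lemma, precisely via Lemma~\ref{2.1} and Arzel\`a--Ascoli) and observe that $t\mapsto t\ln t$ is uniformly continuous on the compact range $[0,C]$, so $v_k\ln v_k\to v\ln v$ uniformly on $E$. This bypasses the $\varepsilon$-machinery entirely.

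Your approach is shorter and conceptually cleaner; the paper's approach is more self-contained in the sense that it uses only the $L^2$ convergence directly (though the $C^{0,\alpha}$ convergence was already in hand). One could shorten your argument further: once the uniform bound $\sup_E v_k\le C$ is known, a.e.\ convergence plus dominated convergence on the compact set $E$ already gives $\int v_k\ln v_k\,\varphi\to\int v\ln v\,\varphi$, so the uniform convergence via Arzel\`a--Ascoli is not strictly needed.
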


\begin{proof}
Fix $\varphi\in C_{c}^{\infty}(M)$ and let $k_0$ be such that $\operatorname{supp} \varphi\subset D(o,k_0)$. Using that, for every $k>k_0$, 
\[
\ 4\int \left\langle \nabla \varphi, \nabla v_{k} \right\rangle \,d\mathrm{vol}+\int \RRR v_{k}\varphi \,d\mathrm{vol}
-\int v_{k}\varphi\ln v_{k}^{2}\,d\mathrm{vol}=\int\lambda_{k}v_{k}\varphi \,d\mathrm{vol},
\]
we want to prove that
\[
\ 4\int \left\langle \nabla \varphi, \nabla v \right\rangle\,d\mathrm{vol}+\int \RRR v\varphi\,d\mathrm{vol}
-\int v\varphi\ln v_{k}^{2}\,d\mathrm{vol}=\int\lambda\varphi\,d\mathrm{vol}.
\]
By the fact that $v_{k}\to v$ weakly in $W^{1,2}(M)$, one gets that
\begin{align*}
\int\left\langle \nabla\varphi, \nabla v_{k}\right\rangle\,d\mathrm{vol}\to&\int\left\langle \nabla \varphi, \nabla v\right\rangle\,d\mathrm{vol}\\
\int Rv_{k}\varphi\,d\mathrm{vol}\to&\int Rv\varphi\,d\mathrm{vol},
\end{align*}
as $k\to\infty$. Moreover
\[
\ \int\lambda_{k}v_{k}\varphi\,d\mathrm{vol}-\int\lambda v\varphi\,d\mathrm{vol}=\int(\lambda_{k}-\lambda)v_{k}\varphi\,d\mathrm{vol}-\int\lambda(v_{k}-v)\varphi\,d\mathrm{vol},
\]
and
\[
\ \int(\lambda_{k}-\lambda)v_{k}\varphi\,d\mathrm{vol}\leq\left|\lambda_{k}-\lambda\right|\left|\int v_{k}\varphi\,d\mathrm{vol}\right|\leq 2\left|\int v\varphi\,d\mathrm{vol}\right|\left|\lambda_{k}-\lambda\right|\to 0,
\]
as $k\to \infty$.  Hence, it remains only to prove that $\forall\,\delta>0$, there exists  $\tilde{k}>k_0$ such that 
\begin{equation}\label{ABCD}
\left|\int v_{k}\ln v_{k}\varphi\,d\mathrm{vol}
-\int v\ln v \varphi\,d\mathrm{vol}\right| \leq\delta
\end{equation}
 for every $k>\tilde{k}$. For every $\varepsilon>0$ we write
\begin{align*}
\left|\int v_{k}\ln v_{k}\varphi\,d\mathrm{vol}
-\int v\ln v \varphi\,d\mathrm{vol}\right| =& \left| \int v_{k}\ln v_{k}\varphi\,d\mathrm{vol}-\int v_{k}\ln (v_{k}+\varepsilon)\varphi\,d\mathrm{vol}\right|\\
&+ \left| \int v_{k}\ln (v_{k}+\varepsilon)\varphi\,d\mathrm{vol}-\int v_{k}\ln (v+\varepsilon)\varphi\,d\mathrm{vol}\right|\\
&+\left| \int v_{k}\ln (v+\varepsilon)\varphi\,d\mathrm{vol}-\int v\ln (v+\varepsilon)\varphi\,d\mathrm{vol}\right|\\
&+\left| \int v\ln (v+\varepsilon)\varphi\,d\mathrm{vol}-\int v\ln v\varphi\,d\mathrm{vol}\right|.
\end{align*}
Let us first observe that
\begin{align*}
\left| \int v_{k}\ln v_{k}\varphi\,d\mathrm{vol}-\int v_{k}\ln (v_{k}+\varepsilon)\varphi\,d\mathrm{vol}\right|=&\left|\int v_{k}\ln\left(\frac{v_{k}+\varepsilon}{v_{k}}\right)\varphi\,d\mathrm{vol}\right|\\
\leq&\int v_{k}|\varphi|\left|\ln\left(\frac{v_{k}+\varepsilon}{v_{k}}\right)\right|\,d\mathrm{vol}\\
=&\int v_{k}|\varphi|\ln\left(1+\frac{\varepsilon}{v_{k}}\right)\,d\mathrm{vol}\leq \int\varepsilon|\varphi|\,d\mathrm{vol}.
\end{align*}
Hence there exists an $\varepsilon_{0}=\varepsilon_{0}(\varphi)$ such that $\forall\,\varepsilon<\varepsilon_{0}$ and for all $k\in \mathbb N$,
\begin{equation}\label{Aweak}
\left| \int v_{k}\ln v_{k}\varphi\,d\mathrm{vol}-\int v_{k}\ln (v_{k}+\varepsilon)\varphi\,d\mathrm{vol}\right|<\frac{\delta}{4}.
\end{equation}
We claim now that there exists a constant $C=C(\varepsilon_{0})$ such that $v\ln(v+\varepsilon)<C$ for every $\varepsilon<\varepsilon_{0}$. Indeed, where $v>1-\varepsilon_{0}$, we have that
\begin{align*}
\left|\ln(v+\varepsilon)\right|\leq&\max\left\{\ln(v+\varepsilon), \left|\ln(1-\varepsilon_{0})\right|\right\}\\
\leq&\max\left\{\ln(v+\varepsilon_{0}), \left|\ln(1-\varepsilon_{0})\right|\right\}.
\end{align*}
Hence, using also \eqref{bdd_v},
\begin{align*}
\left|v\ln(v+\varepsilon)\right|=& |v||\ln(v+\varepsilon)|\leq(v+\varepsilon_{0})|\ln(v+\varepsilon)|\\
\leq&\max\left\{(v+\varepsilon_{0})\ln(v+\varepsilon_{0}), (v+\varepsilon_{0})|\ln(1-\varepsilon_{0})|\right\}\\
\leq&\max\left\{(C+\varepsilon_{0})\ln(C+\varepsilon_{0}),\left|\ln(1-\varepsilon_{0})\right|(C+\varepsilon_{0})\right\}.
\end{align*}
On the other hand, where $v\leq 1-\varepsilon_{0}$, we have that for every $\varepsilon<\varepsilon_{0}$, $v+\varepsilon<v+\varepsilon_{0}\leq 1$. Hence
\[
\ \left|v\ln(v+\varepsilon)\right|=\left|v\right|\left|\ln(v+\varepsilon)\right|<|v||\ln v| \leq e^{-1},
\]
and there exists a $C=C(\varepsilon_{0})$ such that for every $\varepsilon<\varepsilon_{0}$ 
\[
\ |v\ln(v+\varepsilon)|<C,
\]
proving the claim.

Hence, $|v\ln(v+\varepsilon)\varphi|\leq C|\varphi|\in L^{1}$ and by the dominated convergence theorem we get 
\begin{equation*}
\int v \ln(v+\varepsilon)\varphi\,d\mathrm{vol}\to\int v\ln v\varphi\,d\mathrm{vol}
\end{equation*}
as $\varepsilon\to 0$ and, in particular, there exists an $\varepsilon_{1}\leq\varepsilon_{0}$ such that $\forall\,\varepsilon\leq\varepsilon_{1}$
\begin{equation}\label{Dweak}
\left| \int v\ln (v+\varepsilon)\varphi\,d\mathrm{vol}-\int v\ln v\varphi\,d\mathrm{vol}\right|\leq\frac{\delta}{4}.
\end{equation}
We now compute that
\begin{equation*}
\left| \int v_{k}\ln (v+\varepsilon_{1})\varphi\,d\mathrm{vol}-\int v\ln (v+\varepsilon_{1})\varphi\,d\mathrm{vol}\right|\leq \left\|\varphi\right\|_{2}\left\|v_{k}-v\right\|_{2}\sup \left|\ln(v+\varepsilon_{1})\right|
\end{equation*}
Since by Lemma \ref{2.1}
\[
\ \left|\ln(v+\varepsilon_{1})\right|\leq\max\left\{\left|\ln \varepsilon_{1}|, |\ln(C+\varepsilon_{1})\right|\right\},
\]
we hence get that for $k\gg1$
\begin{equation}\label{Cweak}
\left| \int v_{k}\ln (v+\varepsilon_{1})\varphi\,d\mathrm{vol}-\int v\ln (v+\varepsilon_{1})\varphi\,d\mathrm{vol}\right|\leq \frac{\delta}{4}.
\end{equation}
Finally, by Lemma \ref{2.1}, we have that
\begin{align}
\left| \int v_{k}\ln (v_{k}+\varepsilon_{1})\varphi\,d\mathrm{vol}-\int v_{k}\ln (v+\varepsilon_{1})\varphi\,d\mathrm{vol}\right|=&\int\left|v_{k}\ln\left(1+\frac{v_{k}-v}{v+\varepsilon_{1}}\right)\varphi\,d\mathrm{vol}\right|\label{Bweak}\\
\leq&\int|v_{k}|\left|\frac{v_{k}-v}{v+\varepsilon_1}\right||\varphi|\,d\mathrm{vol}\nonumber\\
\leq& \frac{C}{\varepsilon_1}\left\|v_{k}-v\right\|_{2}\left\|\varphi\right\|_{2}\leq\frac{\delta}{4} \nonumber
\end{align}
for $k$ large enough. By \eqref{Aweak}, \eqref{Bweak}, \eqref{Cweak}, \eqref{Dweak} we obtain \eqref{ABCD} and hence our claim.
\end{proof}
\medskip

By Lemma \ref{WeakSol} and elliptic regularity (see e.g. Theorem 3.54 in \cite{Au}), we get that $v\in C^{2,\beta}_{\mathrm{loc}}$, for $\beta<\alpha$. Thus an application of the maximum principle gives that $v>0$ on $M$ and this permits to get, again by elliptic regularity, that $v$ is indeed smooth and thus a classical solution of \eqref{EL}.
\medskip

To conclude the proof of Theorem \ref{ExtrLSI} it remains to prove the following
\begin{lemma}\label{lem4.3}
$v$ is an extremal of the Log Sobolev functional $\mathcal{L}$.
\end{lemma}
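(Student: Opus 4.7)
My plan is to use the Euler--Lagrange equation \eqref{EL} satisfied by $v$, together with a rescaling argument, to show simultaneously that $\|v\|_{L^{2}(M)}=1$ and $\mathcal{L}(v,M,g)=\lambda$. The key intermediate step is the identity
\begin{equation*}
\mathcal{L}(v,M,g)=\lambda\,\|v\|_{L^{2}(M)}^{2}.
\end{equation*}

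To establish this, I would test \eqref{EL} against $\chi_{R}^{2}v$, where $\chi_{R}\in C_{c}^{\infty}(M)$ is a standard cutoff with $\chi_{R}\equiv 1$ on $B_{R}(o)$, $\mathrm{supp}\,\chi_{R}\subset B_{2R}(o)$, $0\leq\chi_{R}\leq 1$ and $|\nabla\chi_{R}|\leq C/R$. Integrating by parts, and using the pointwise consequence of \eqref{EL} obtained by multiplying by $v$,
\begin{equation*}
4|\nabla v|^{2}+Rv^{2}-v^{2}\ln v^{2}-\lambda v^{2}=4\,\mathrm{div}(v\nabla v),
\end{equation*}
one obtains
\begin{equation*}
\int_{M}\chi_{R}^{2}\bigl(4|\nabla v|^{2}+Rv^{2}-v^{2}\ln v^{2}-\lambda v^{2}\bigr)\,d\mathrm{vol}=-8\int_{M}\chi_{R}\,v\,\langle\nabla\chi_{R},\nabla v\rangle\,d\mathrm{vol}.
\end{equation*}
The right-hand side tends to $0$ as $R\to\infty$ by Cauchy--Schwarz, since $v,\nabla v\in L^{2}(M)$ and $|\nabla\chi_{R}|\leq C/R$. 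On the left-hand side, the gradient and $L^{2}$ pieces converge by monotone convergence; the scalar-curvature piece is handled via the lower bound $R\geq-m(m-1)K$; and the logarithmic piece is split into $\{v\leq 1\}$ (where $|v^{2}\ln v^{2}|\leq C\,v^{2-\varepsilon}$ is controlled using the local $L^{\infty}$ bounds supplied by Lemma \ref{2.1} together with the $L^{2}$-bound on $v$) and $\{v\geq 1\}$ (where $v^{2}\ln v^{2}\leq C\,v^{2^{*}}$ is controlled via the Sobolev embedding $W^{1,2}(M)\hookrightarrow L^{2^{*}}(M)$ granted by our geometric assumptions). Dominated convergence then delivers the identity.

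From the weak convergence $v_{k}\rightharpoonup v$ in $W^{1,2}(M)$ and $\|v_{k}\|_{L^{2}}=1$ we already know $\|v\|_{L^{2}}^{2}\leq 1$. If this inequality were strict, setting $\tilde v:=v/\|v\|_{L^{2}}$ one would have $\|\tilde v\|_{L^{2}}=1$ and, using the identity just proved,
\begin{equation*}
\mathcal{L}(\tilde v,M,g)=\frac{\mathcal{L}(v,M,g)}{\|v\|_{L^{2}}^{2}}+\ln\|v\|_{L^{2}}^{2}=\lambda+\ln\|v\|_{L^{2}}^{2}<\lambda.
\end{equation*}
Cutting $\tilde v$ off by $\chi_{R}$ and approximating the result by $C_{c}^{\infty}$ functions (along the lines of Step 2 in the proof of the preceding lemma) would then produce admissible test functions violating the very definition of $\lambda$. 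Hence $\|v\|_{L^{2}}=1$ and the identity gives $\mathcal{L}(v,M,g)=\lambda$, proving that $v$ is an extremal. The main obstacle throughout is the control of the term $v^{2}\ln v^{2}$, which has no sign and is not a priori globally in $L^{1}(M)$; the combination of the Sobolev embedding with the regularity estimates of Lemma \ref{2.1}, tied to the divergence structure of the integrand coming from \eqref{EL}, is precisely what makes the limit passage work.
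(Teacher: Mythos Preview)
Your strategy is the same as the paper's: derive the identity $\mathcal{L}(v,M,g)=\lambda\|v\|_{L^2}^2$ from the Euler--Lagrange equation by an integration by parts, then rule out $\|v\|_{L^2}<1$ by rescaling to $\tilde v=v/\|v\|_{L^2}$ and comparing $\mathcal{L}(\tilde v)$ with $\lambda$. The paper packages the first step via Gaffney's Stokes theorem applied to the vector field $v\nabla v$, whereas you test against $\chi_R^2 v$; for the second step the paper proves $\mathcal{L}(\tilde v\varphi_R)\to\mathcal{L}(\tilde v)$ explicitly.

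There is a gap in your limit justification for the identity. You treat the pieces of the integrand separately and invoke dominated convergence, but neither $Rv^2$ nor $v^2\ln v^2\cdot\mathbf{1}_{\{v\le 1\}}$ is a priori in $L^1(M)$: under the hypotheses of this part of Theorem~\ref{ExtrLSI} the scalar curvature is only bounded \emph{below}, and your bound $|v^2\ln v^2|\le Cv^{2-\varepsilon}$ on $\{v\le1\}$ does not help because $v\in L^2\cap L^\infty$ does not force $v\in L^{2-\varepsilon}$ on a manifold of infinite volume. What your estimates \emph{do} show is that the \emph{negative part} of the full integrand $4|\nabla v|^2+Rv^2-v^2\ln v^2-\lambda v^2$ lies in $L^1$ (from $R\ge -m(m-1)K$ and $(v^2\ln v^2)_+\le Cv^{2^*}\in L^1$ via Sobolev); since $\int\chi_R^2(\cdot)$ stays bounded, monotone convergence on the positive part then gives full integrability and the desired limit. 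This one-sided argument is exactly what Gaffney encodes in the paper's proof, and the paper uses the same device again---rewriting $\tilde v^2\ln\tilde v^2$ via the equation as a combination of $\tilde v\Delta\tilde v$, $R\tilde v^2$ and $\tilde v^2$, all now known to be in $L^1$---to carry out the approximation $\mathcal{L}(\tilde v\varphi_R)\to\mathcal{L}(\tilde v)$ that you defer without detail.
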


\begin{proof}
Since $v$ is a classical solution of \eqref{EL} we have that
\[
\ 4v\Delta v=\RRR v^2-v^2\ln v^2-\lambda v^2.
\]
Since $v\in W^{1,2}(M)$, $R$ is bounded from below, and \eqref{Rot1} holds, we know that $\min\{v\Delta v;0\}\in L^{1}(M)$. Moreover, by Cauchy-Schwarz inequality we have that $v\nabla v\in L^{1}(M)$. Hence, by Gaffney's version of Stokes' theorem, \cite{Ga}, $v\Delta v\in L^1(M)$ and
\[
\ \int_{M} v\Delta v\,d\mathrm{vol}=-\int_{M} |\nabla v|^2\,d\mathrm{vol}.
\]
Thus
\[
\ \mathcal{L}(v, M, g)=\int_{M} (4|\nabla v|^2+ \RRR v^2- v^{2}\ln v^2 )\,d\mathrm{vol}=\lambda \int_{M} v^2\,d\mathrm{vol}.
\]
If $\int_{M}v^{2}=1$ then $v$ is an extremal function of $\mathcal{L}$. So suppose $\int_{M}v^{2}<1$ and consider the function 
\[
\ \tilde{v}=\frac{v}{\left\|v\right\|_{2}}.
\]
Then $\left\|\tilde{v}\right\|_{2}=1$ and
\begin{align*}
\lambda=&\frac{1}{\left\|v\right\|_{2}^2}\mathcal{L}(v, M, g)=\frac{\int_{M} (4|\nabla v|^2+\RRR v^2-v^2\ln v^2)\,d\mathrm{vol}}{\left\|v\right\|_2^2}\\
=&\int_{M}\left[\left(4|\nabla \tilde{v}|^2+\RRR |\tilde{v}|^2-\tilde{v}^2\ln\tilde{v}^2\right)-\tilde{v}^2\ln\left\|v\right\|_{2}^2\right]\,d\mathrm{vol}.
\end{align*}
We claim that $\mathcal{L}(\tilde{v}, M, g)\geq \lambda$. From this it would follow that $\lambda\geq \lambda - \tilde{v}^2\ln\left\|v\right\|_{2}^{2}$. Since we are assuming $\int_{M}v^2<1$ we hence get a contradiction, thus finishing the proof of Theorem \ref{ExtrLSI}. Let us now prove the claim.
Let $o\in M$ and consider the standard smooth cut-off functions, for $R>1$,
\begin{equation*}
\begin{aligned}
0\leq\varphi_R\leq 1,\quad&\varphi_R|_{B_R(o)}=1,\quad&\varphi_{R}|_{M\setminus B_{2R}(o)}=0,\quad&|\nabla\varphi_{R}|\leq 2
\end{aligned}
\end{equation*}
We then consider the functions $\tilde{v}\varphi_{R}\in C_{c}^{\infty}(M)$ and we prove that
\begin{equation}\label{Extr1}
\mathcal{L}(\tilde{v}\varphi_{R}, M, g)\to \mathcal{L}(\tilde{v},M,g),
\end{equation}
as $R\to\infty$. Since $\mathcal{L}(\tilde{v}\varphi_{R}, M, g)\geq\lambda$ for every $R$, we hence get the claim. 

We have that
\begin{align*}
\mathcal{L}(\tilde{v},M,g)-\mathcal{L}(\tilde{v}\varphi_{R}, M, g)=&\int_{M\setminus B_{R}}(1-\varphi_{R}^2)\left(4|\nabla \tilde{v}|^2+\RRR \tilde{v}^2-\tilde{v}^2\ln\tilde{v}^2\right)\,d\mathrm{vol}\\
&-\int_{B_{2R}\setminus B_R}4\tilde{v}^2|\nabla \varphi_{R}|^2\,d\mathrm{vol}+\int_{B_{2R}\setminus B_R}\tilde{v}^2\varphi_{R}^2\ln\varphi_{R}^2\,d\mathrm{vol}\\
&-8\int_{B_{2R}\setminus B_{R}}\tilde{v}\varphi_R\left\langle\nabla\tilde{v},\nabla\varphi_{R}\right\rangle\,d\mathrm{vol}.
\end{align*}
Using that $\tilde{v}\in W^{1,2}(M)$, $|\nabla \varphi_{R}|$ is bounded and $\varphi_{R}^2\ln\varphi_{R}^2<e^{-2}$, we easily get that the last three terms tend to zero as $R\to \infty$. Moreover, since
\[
\ \tilde{v}^2\ln\tilde{v}^2=-4\tilde{v}\Delta \tilde{v}+\RRR\tilde{v}^2-\lambda \tilde{v}^2-\tilde{v}^2\ln\left\|v\right\|_{2}^2,
\]
we have that
\begin{equation*}
\int_{M\setminus B_{R}}(1-\varphi_{R}^2)\left(4|\nabla \tilde{v}|^2+\RRR \tilde{v}^2-\tilde{v}^2\ln\tilde{v}^2\right)\,d\mathrm{vol}=\int_{M\setminus B_{R}}(1-\varphi_{R}^2)\left(4|\nabla \tilde{v}|^2+4\tilde{v}\Delta\tilde{v}+\lambda\tilde{v}^2+\tilde{v}^2\ln\left\|v\right\|_{2}^{2}\right)\,d\mathrm{vol}
\end{equation*}
Hence, using again the fact that $\tilde{v}\in W^{1,2}(M)$, $\tilde{v}\in L^{2}(M)$ and $\tilde{v}\Delta\tilde{v}\in L^{1}(M)$, it is not difficult to prove that also this expression goes to $0$ as the radius $R\to\infty$,
thus proving the claim.
 \end{proof}

\section{Distance-like functions on complete non-compact manifolds}\label{AppA}

Before getting into the proof of Proposition \ref{DistLike} we would like to explicitly point out that, using the distance-like functions we are going to construct, one is able to produce Hessian cut-off functions. In particular we can obtain the following result extending Proposition 3.7 in \cite{GP}; see also \cite{G}.

\begin{corollary}[Hessian cut-off functions]\label{coro-hess}
Let $(M^{m}, g)$ be a complete non-compact Riemannian manifold such that $|\mathrm{Ric}|\leq (m-1) K$ for some $K\in\left(0,\infty\right)$ and $\mathrm{inj}_{(M,g)}>i_{0}>0$ and fix a reference point $o\in M$. Then there exist a constant $C_{m,K}\in\left(1,\infty\right)$, depending only on $m$ and $K$, and a sequence $\{\chi_n\}\subset C_{c}^{\infty}(M,[0,1])$ of cut-off functions such that
\begin{itemize}
\item $\chi_n=1$ on $B_{n-C_{m,K}}(o)$
\item $\operatorname{supp}(\chi_n)\subset B_{2n-1}(o)$. 
\item $\left\|\nabla \chi_n\right\|_{\infty}\to 0$ as $n\to\infty$
\item $\left\|\mathrm{Hess}(\chi_n)\right\|_{\infty}\to 0$ as $n\to\infty$.
\end{itemize}
\end{corollary}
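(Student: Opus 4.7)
The plan is to obtain $\chi_n$ by composing the distance-like function $h$ of Proposition \ref{DistLike} with a carefully chosen smooth cut-off on the real line. Set $\chi_n := \phi_n \circ h$, where $\phi_n \in C^\infty(\mathbb{R},[0,1])$ is a standard one-dimensional cut-off with $\phi_n \equiv 1$ on $(-\infty, n]$, $\phi_n \equiv 0$ on $[2n,\infty)$, and transitioning smoothly on the interval $[n, 2n]$ (of length $n$), with derivative bounds $\|\phi_n'\|_\infty \leq C/n$ and $\|\phi_n''\|_\infty \leq C/n^2$ for some absolute constant $C$.

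First I would verify the two support properties. If $x \in B_{n-C_{m,K}}(o)$, then by \eqref{A} we have $h(x) \leq r(x) + C_{m,K} \leq n$, hence $\chi_n(x) = \phi_n(h(x)) = 1$. Conversely, if $x \notin B_{2n-1}(o)$, then by \eqref{A} we have $h(x) \geq r(x) + 1 \geq 2n$, so $\chi_n(x) = 0$; in particular $\operatorname{supp}(\chi_n) \subset B_{2n-1}(o)$. The smoothness of $\chi_n$ and the compact support then follow from the smoothness of $h$ and $\phi_n$.

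Next I would estimate the derivatives via the chain and product rules. We have $\nabla \chi_n = \phi_n'(h)\nabla h$, so by \eqref{B},
\begin{equation*}
\|\nabla \chi_n\|_\infty \leq \|\phi_n'\|_\infty \cdot \|\nabla h\|_\infty \leq \frac{C \cdot C_{m,K}}{n} \longrightarrow 0.
\end{equation*}
For the Hessian,
\begin{equation*}
\mathrm{Hess}(\chi_n) = \phi_n''(h)\, dh \otimes dh + \phi_n'(h)\,\mathrm{Hess}(h),
\end{equation*}
and combining \eqref{B}--\eqref{C} with the bounds on $\phi_n',\phi_n''$,
\begin{equation*}
\|\mathrm{Hess}(\chi_n)\|_\infty \leq \|\phi_n''\|_\infty\, \|\nabla h\|_\infty^2 + \|\phi_n'\|_\infty\, \|\mathrm{Hess}(h)\|_\infty \leq \frac{C\,C_{m,K}^2}{n^2} + \frac{C\,C_{m,K}}{n} \longrightarrow 0.
\end{equation*}

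There is no real obstacle here: all the geometric content has been absorbed into Proposition \ref{DistLike}, which provides a smooth substitute for $r$ whose gradient and Hessian are uniformly bounded (whereas $r$ itself is only Lipschitz and fails to be smooth on the cut locus). The mild point to watch is simply that the transition region $[n,2n]$ has length $n$, so that the one-dimensional cut-off $\phi_n$ has $\|\phi_n'\|_\infty$ and $\|\phi_n''\|_\infty$ going to zero uniformly; one may construct such $\phi_n$ by rescaling a fixed smooth function $\phi \in C^\infty_c(\mathbb{R})$ equal to $1$ near $0$ and vanishing outside $[-1,1]$, via $\phi_n(t) := \phi\bigl((t-n)/n\bigr)$ (with a trivial adjustment so $\phi_n = 1$ on $(-\infty,n]$).
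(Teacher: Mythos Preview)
Your proof is correct and essentially identical to the paper's: both compose the distance-like function $h$ from Proposition \ref{DistLike} with a rescaled one-dimensional cut-off $\phi_n(t)=\phi(t/n-1)$, then read off the support properties from \eqref{A} and the derivative decay from \eqref{B}--\eqref{C} via the chain rule.
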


\begin{proof}(of Corollary \ref{coro-hess})
Let $\phi\in C^\infty (\mathbb{R},[0,1])$ be a cut-off function such that 
\begin{equation*}
\begin{aligned}
\phi|_{(-\infty,0]}=1,\quad&\phi|_{[1,\infty)}=0,&|\phi'|\leq 2,\quad|\phi''|\leq a,
\end{aligned}
\end{equation*}
for some $a>0$. For any $n\geq 1$, let $\phi_n\in C^\infty ([0,+\infty))$ be a cut-off function defined by $\phi_n(t):=\phi(t/n-1)$. In particular
\begin{equation*}
\begin{aligned}
\phi_n|_{[0,n]}=1,\quad&\phi_n|_{[2n,\infty)}=0,\quad&|\phi'_n|\leq 2/n,\quad&|\phi''_n|\leq a/n^2.
\end{aligned}
\end{equation*}
 Let the constant $C_{m,K}$ and the function $h\in C^\infty(M)$ be as in Proposition \ref{DistLike}. For each integer $n> C_{m,K}$, define $\chi_n:=\phi_n\circ h$. One can easily see that 
$\chi_n=1$ on $B_{n-C_{m,K}}(o)$ and $\chi_n=0$ on $M\setminus B_{2n-1}(o)$. Moreover
\begin{eqnarray*}
|\nabla \chi_n|&=&|\phi'(h)||\nabla h|\leq 2C_{m,K}/n\\
|\mathrm{Hess}(\chi_n)|&\leq& |\phi_n''(h)(dh\otimes dh)|+|\phi'_n(h)\mathrm{Hess}(h)|\leq aC_{m,K}^2/n^2+2C_{m,K}/n.
\end{eqnarray*}
\end{proof}
\medskip

The remaining of the section is devoted to present the proof of Proposition \ref{DistLike}.

The idea of the proof, coming from a paper by Tam \cite{T} (see e.g. \cite{CCG_3}), is to start with a distance-like function with uniformly bounded gradient and evolve it by heat equation to have also a uniform bound on the Hessian. The key point which permits to conclude in our situation is the employment of a result of M. Anderson controlling the global $C^{1,\frac{1}{2}}$ harmonic radius of the manifold under our assumptions.
\medskip 

By Proposition 2.1 in \cite{GW} we know that given a complete Riemannian manifold $(M^{m}, g)$ and $o\in M$ there exists a $u\in C^{\infty}(M)$ with
\begin{equation}\label{GreeneWu}
|u(x)-r(x)|\leq 1\qquad\mathrm{and}\qquad|\nabla u|(x)\leq 2\quad\mathrm{on}\quad M.
\end{equation}
Let $H:M\times M\times(0,\infty)\to(0,\infty)$ be the heat kernel of $(M,g)$. Then the function $h:M\times(0,\infty)\to\mathbb{R}$ defined by
\begin{equation}\label{conv}
h(x,t):=\int_{M}H(x,y,t)u(y)d\mathrm{vol}(y)
\end{equation}
is a solution to the heat equation with $\lim_{t\to 0}h(x, t)=u(x)$ uniformly in $x\in M$.
\medskip

Reasoning as in Step 1 and Step 2 of the proof of Proposition 26.49 in \cite{CCG_3} we obtain that $h(x):= h(x,1)$ satisfies both conditions \eqref{A} and \eqref{B} of the statement. Indeed sectional curvature bounds can be replaced in these steps by Ricci curvature bounds, where needed, without any further modifications. To obtain the control on the Hessian we can adapt Step 3 of Proposition 26.49, up to use a result obtained in \cite{And} instead of a result by J. Jost and H. Karcher, \cite{JK}. For the sake of completeness we give a detailed exposition of this last step below.

It hence remains to prove that there exists a constant $C=C(m,k)$ such that
\[
\ \left|\mathrm{Hess}(h)\right|(x, 1)\leq C\quad\forall x\in M.
\]
Given any point $x\in M$, we know that
\[
\ \mathrm{exp}_{x}:\mathbb{B}_{i_{0}}(0)\subset T_{x}M\to B_{i_{0}}(x)
\]
is a local diffeomorphism, where $i_{0}$ is the lower bound for the injectivity radius. We consider the pull-back metric
\[
\ \hat{g}:=(\mathrm{exp}_{x})^{*}g
\] 
on $\mathbb{B}_{i_{0}}(0)$.
Define $\hat{h}_{x}:\mathbb{B}_{i_{0}}(0)\times(0, \infty)\to\mathbb{R}$ by
\begin{equation*}
\hat{h}_{x}(v, t):=h\left(\mathrm{exp}_{x}(v), t\right)-u(x).
\end{equation*}
Then, by \eqref{A} and \eqref{GreeneWu}, there exists $C_{1}$ depending only on $m$ and $K$ such that, for every $x\in M$,
\[
\ \left|\hat{h}_{x}(0,t)\right|=\left|h(x,t)-u(x)\right|\leq C_{1}
\]
for $t\in (0, 1]$. Moreover, by \eqref{B}, for every $x\in M$,
\[
\ \left|\hat{h}_{x}(v, t)\right|\leq C_{1}+i_{0}C_{m,K},
\]
for every $t\in(0,1]$ and $v\in T_{x}M$ with $|v|<i_{0}$. By the invariance by isometries of the norm of the Hessian, note that
\[
\ \left|\mathrm{Hess}_{g}h \right|_{g}(x, t)=\left|\mathrm{Hess}_{\hat{g}}\hat{h}_{x}\right|_{\hat{g}}(0, t).
\]
Now we have that
\[
\ \left(\partial_{t}-\Delta_{\hat{g}}\right)\hat{h}_{x}=0,
\]
and
\begin{equation}\label{Est}
|\hat{h}_{x}|\leq C_{1}+i_{0}C_{m,K}\qquad\mathrm{in}\qquad\mathbb{B}_{i_{0}}(0)\times(0,1].
\end{equation}
Recall that a local coordinate system $\left\{x^{i}\right\}$ is said to be harmonic if for any $i$, $\Delta_{g}x^{i}=0$. The harmonic radius is then defined as follows.
\begin{definition}
Let $(M^m, g)$ be a smooth Riemannian manifold and let $x\in M$. Given $Q>1$, $k\in\mathbb{N}$, and $\alpha\in\left(0,1\right)$, we define the $C^{k,\alpha}$ \textbf{harmonic radius at $x$} as the largest number $r_{H}=r_{H}(Q,k,\alpha)(x)$ such that on the geodesic ball $B_{r_H}(x)$ of center $x$ and radius $r_{H}$, there is a harmonic coordinate chart such that the metric tensor is $C^{k,\alpha}$ controlled in these coordinates. Namely, if $g_{ij}$, $i,j=1,\ldots,m$, are the components of $g$ in these coordinates, then
\begin{enumerate}
\item $Q^{-1}\delta_{ij}\leq g_{ij}\leq Q\delta_{ij}$ as bilinear forms;
\item $\sum_{1\leq|\beta|\leq k}r_{H}^{|\beta|}\sup\left|\partial_{\beta}g_{ij}(y)\right|+\sum_{|\beta|=k}r_{H}^{k+\alpha}\sup_{y\neq z}\frac{\left|\partial_{\beta}g_{ij}(z)-\partial_{\beta}g_{ij}(y)\right|}{d_{g}(y,z)^{\alpha}}\leq Q-1$.
\end{enumerate}
We then define the \textbf{(global) harmonic radius} $r_{H}(Q,k,\alpha)(M)$ of $(M,g)$ by
\[
\ r_{H}(Q,k,\alpha)(M)=\inf_{x\in M}r_{H}(Q,k,\alpha)(x)
\]
where $r_{H}(Q,k,\alpha)(x)$ is as above.
\end{definition}
The following result has been proved in \cite{And}. As it is stated below, it can be found in the survey paper \cite{HH} (see also \cite{Heb}).
\begin{proposition}\label{HarmRadEst}
Let $\alpha\in(0,1)$, $Q>1$, $\delta>0$. Let $(M^m,g)$ be a smooth Riemannian manifold, and $\Omega$ an open subset of $M$. Set
\[
\ \Omega(\delta)=\left\{x\in M\quad\mathrm{s.t.}\quad d_{g}(x,\Omega)<\delta\right\}.
\]
Suppose that for some $\lambda\in\mathbb{R}$ and some $i>0$, we have that for all $x\in\Omega(\delta)$,
\[
\ \mathrm{Ric}(x)\geq\lambda g(x)\quad\mathrm{and}\quad\mathrm{inj}_{g}(x)\geq i.
\]
Then there exists a positive constant $C=C(m,Q,\alpha,\delta,i,\lambda)$, such that for any $x\in \Omega$, $r_{H}(Q, 0, \alpha)(x)\geq C$. In addition, if we furthermore assume that for some integer $k$ and some positive constant $C(j)$,
\[
\ |\nabla^{j} \mathrm{Ric}(x)|\leq C(j)\quad\mathrm{for\,\, all}\quad j=0,\ldots,k \quad\mathrm{and \,\,all}\quad x\in\Omega(\delta),
\]
then, there exists a positive constant $C=C(m,Q,k,\alpha,\delta, i, C(j)_{0\leq j\leq k})$, such that for any $x\in\Omega$, $r_{H}(Q, k+1,\alpha)(x)\geq C$.
\end{proposition}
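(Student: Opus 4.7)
The plan is a blow-up argument by contradiction, exploiting the fact that in harmonic coordinates the metric satisfies a quasilinear elliptic system whose right-hand side is essentially the Ricci tensor.

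Suppose no uniform constant $C$ exists for the first assertion. Then there is a sequence of pointed manifolds $(M_k, g_k, x_k)$ with $x_k\in\Omega_k$, satisfying $\mathrm{Ric}(g_k)\geq\lambda g_k$ and $\mathrm{inj}_{g_k}(y)\geq i$ for every $y\in\Omega_k(\delta)$, yet $r_k:=r_H(Q,0,\alpha)(x_k)\to 0$. Rescale by $\tilde g_k:=r_k^{-2}g_k$. By the scale-invariance built into the defining inequalities (1)-(2), $\tilde r_H(Q,0,\alpha)(x_k)=1$ identically; meanwhile the Ricci bound weakens to $\mathrm{Ric}(\tilde g_k)\geq\lambda r_k^2\,\tilde g_k\to 0$, and the injectivity radius at $x_k$ grows as $i/r_k\to\infty$. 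The $\tilde g_k$-distance from $x_k$ to the boundary of $\Omega_k(\delta)$ likewise diverges, so for any fixed $\rho>0$ the ball $B_\rho^{\tilde g_k}(x_k)$ eventually lies inside $\Omega_k(\delta)$.

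The analytic core is to construct, on balls of a $k$-independent radius $\rho>0$, harmonic coordinate charts around $x_k$ in which $(\tilde g_k)_{ij}$ satisfy uniform $C^{1,\alpha'}$ bounds with $\alpha'>\alpha$. Existence of such charts on definite-size balls follows by solving the Dirichlet problem $\Delta_{\tilde g_k}y^i=0$ with boundary values close to geodesic normal coordinates and invoking a quantitative implicit-function argument, in which the diverging injectivity radius and the vanishing Ricci lower bound guarantee invertibility. The a priori estimate on the metric rests on the identity, valid in harmonic coordinates,
\begin{equation*}
-\frac{1}{2}\,\tilde g_k^{pq}\,\partial_p\partial_q (\tilde g_k)_{ij}+Q_{ij}(\tilde g_k,\partial\tilde g_k)=-\mathrm{Ric}(\tilde g_k)_{ij},
\end{equation*}
where $Q$ is quadratic in $\partial\tilde g_k$ with coefficients polynomial in $\tilde g_k$ and its inverse. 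The uniform $C^0$ control from condition (1) of the definition of $r_H$ makes the operator uniformly elliptic; combined with the $L^\infty$ bound on $\mathrm{Ric}(\tilde g_k)$, a careful iteration of $L^p$ Calderón-Zygmund estimates (needed to tame the quadratic $Q$) followed by Morrey embedding yields a uniform $C^{1,\alpha'}$ bound on $(\tilde g_k)_{ij}$ on a slightly smaller ball.

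An Arzelà-Ascoli-type argument in the common harmonic charts then extracts a subsequence converging in $C^{1,\alpha}$ to a limit metric $g_\infty$ defined on a complete pointed limit space $(M_\infty,g_\infty,x_\infty)$; since injectivity radii diverge, $M_\infty$ is diffeomorphic to $\mathbb{R}^m$ and $g_\infty$ is $C^{1,\alpha}$ in global harmonic coordinates, with $\mathrm{Ric}(g_\infty)\geq 0$ weakly. The contradiction comes from the strict improvement of estimates at the limit: the $C^{1,\alpha'}$ bounds with $\alpha'>\alpha$ ensure that $g_\infty$ satisfies the defining inequalities of $r_H(Q,0,\alpha)$ with strict slack on a ball of radius strictly larger than $1$, so $r_H(x_\infty)(g_\infty)>1$. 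On the other hand, passing the pointwise and Hölder inequalities to the limit immediately yields the lower-semicontinuity $r_H(x_\infty)(g_\infty)\leq\liminf_k\tilde r_H(x_k)(\tilde g_k)=1$, contradiction.

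For the higher-order part, the additional hypotheses $|\nabla^j\mathrm{Ric}|\leq C(j)$ for $j=0,\dots,k$ are preserved and rescale favourably; differentiating the elliptic system above $k$ times and iterating Schauder estimates upgrades the uniform bound to $C^{k+1,\alpha'}$ with $\alpha'>\alpha$, and the same blow-up concludes. The principal obstacle is the first step — promoting a one-sided Ricci bound and a uniform $C^0$ metric bound to a genuine $C^{1,\alpha'}$ estimate on the metric in harmonic coordinates — since the quadratic first-order nonlinearity $Q$ cannot be treated by a single application of Schauder theory and requires the careful $L^p$ iteration noted above; this is the analytic heart of Anderson's approach.
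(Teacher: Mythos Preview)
The paper does not prove this proposition: it is quoted as a known result of Anderson, with a reference to the survey of Hebey--Herzlich for the formulation. So there is no ``paper's own proof'' to compare against; what you have written is essentially a sketch of Anderson's blow-up/compactness argument, and for the \emph{second} assertion (the $C^{k+1,\alpha}$ bound under two-sided control $|\nabla^j\mathrm{Ric}|\leq C(j)$) your outline is the standard one and is correct in spirit.

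There is, however, a genuine gap in your treatment of the \emph{first} assertion. You write that the elliptic system $-\tfrac12\tilde g_k^{pq}\partial_p\partial_q(\tilde g_k)_{ij}+Q_{ij}=-\mathrm{Ric}(\tilde g_k)_{ij}$, ``combined with the $L^\infty$ bound on $\mathrm{Ric}(\tilde g_k)$'', yields a uniform $C^{1,\alpha'}$ estimate. But in the first part of the proposition only the \emph{lower} bound $\mathrm{Ric}\geq\lambda g$ is assumed; after rescaling this becomes $\mathrm{Ric}(\tilde g_k)\geq\lambda r_k^2\,\tilde g_k$, which gives no $L^\infty$ control on the right-hand side of the system, and the Calder\'on--Zygmund/Schauder iteration you invoke cannot be launched. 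The $C^{0,\alpha}$ harmonic-radius estimate under a one-sided Ricci bound is due to Anderson--Cheeger rather than Anderson alone, and its proof does \emph{not} proceed through elliptic regularity for $g$ in harmonic coordinates; it relies instead on comparison estimates for distance functions (Abresch--Gromoll type inequalities and Laplacian comparison) to produce the needed H\"older control. Your sketch, as written, proves only the higher-order clause; for the base case you would need to substitute the Anderson--Cheeger machinery for the elliptic-system argument.
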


In our situation, since $\mathrm{inj}_{\hat{g}}(0)\geq i_{0}$, we have that for every $v\in\mathbb{B}_{\frac{i_{0}}{2}}(0)$ 
\[
\ \mathrm{inj}_{\hat{g}}(v)=d_{\hat{g}}\left(v, \partial\mathbb{B}_{i_{0}}(0)\right)\geq \frac{i_{0}}{2}.
\]
Hence, for every $v\in\mathbb{B}_{\frac{i_{0}}{2}}(0)$ we have that
\begin{eqnarray*}
	&&|\mathrm{Ric}(\hat{g})|(v)\leq (m-1)K,\\
	&&\mathrm{inj}_{\hat{g}}(v)\geq\frac{i_{0}}{2}.
\end{eqnarray*}
Fixing $\delta>0$ sufficiently small, by Proposition \ref{HarmRadEst} there exists $C=C(m,Q,\delta,i_{0}, K)$ such that for every $w\in\mathbb{B}_{\frac{i_{0}}{2}-\delta}(0)=: \mathbb{B}_{\rho_{0}}(0)$
\[
\ r_{H}(Q, 1, \frac{1}{2})(w)\geq C.
\]
With respect to the harmonic coordinates $\left\{\hat{y}_{i}\right\}$,
\begin{align*}
\frac{\partial \hat{h}_{x}}{\partial t}=&\frac{1}{\sqrt{|\hat{g}|}}\sum_{i,j=1}^{m}\frac{\partial}{\partial \hat{y}^{i}}\left(\sqrt{|\hat{g}|}\hat{g}^{ij}\frac{\partial\hat{h}_{x}}{\partial\hat{y}^{j}}\right)\\
=&\sum_{i,j=1}^{m}\left(\hat{g}^{ij}\frac{\partial^{2}\hat{h}_{x}}{\partial \hat{y}^{i}\partial\hat{y}^{j}}+\frac{1}{\sqrt{|\hat{g}|}}\frac{\partial}{\partial\hat{y}^{i}}\left(\sqrt{|\hat{g}|}\hat{g}^{ij}\right)\frac{\partial\hat{h}_{x}}{\partial \hat{y}^{j}}\right)\\
=&\sum_{i,j=1}^{m}\left(\hat{g}^{ij}\frac{\partial^{2}\hat{h}_{x}}{\partial\hat{y}^{i}\partial\hat{y}^{j}}\right).
\end{align*}
Using \eqref{Est} and the definition of harmonic radius, by Schauder's estimates for parabolic equations (see \cite{Fried}), we get that
\[
\ \left\|\hat{h}_{x}\right\|_{C^{2,\frac{1}{2}}}\leq C_{2}<\infty
\]
in $\mathbb{B}_{\frac{\rho_{0}}{2}}(0)\times\left[\frac{1}{2}, 1\right]$, for some $C_{2}$ depending only on $m$ and $K$. In particular
\[
\ \left\|\hat{h}_{x}(\cdot, 1)\right\|_{C^{2, \frac{1}{2}}}\leq C_{2}
\]
in $\mathbb{B}_{\frac{\rho_0}{2}}(0)$ with respect to harmonic coordinates. Since
\begin{eqnarray*}
\,^{\hat{g}}\nabla_{i}\,^{\hat{g}}\nabla_{j}&=&\frac{\partial^{2}}{\partial \hat{y}^{i}\partial\hat{y}^{j}}-\,^{\hat{g}}\Gamma_{ij}^{k}\frac{\partial}{\partial y^{k}}\\
\,^{\hat{g}}\Gamma_{ij}^{k}&=&\frac{1}{2}\hat{g}^{kl}\left(\frac{\partial}{\partial \hat{y}^{i}}\hat{g}_{jl}+\frac{\partial}{\partial\hat{y}^{j}}\hat{g}_{il}-\frac{\partial}{\partial \hat{y}^{l}}\hat{g}_{ij}\right),
\end{eqnarray*}
corresponding to $\hat{y}=0$, by the definition of harmonic radius, we have
\[
\ \left|\mathrm{Hess}_{g}h\right|_{g}(x, 1)=\left|\mathrm{Hess}_{\hat{g}}(\hat{h}_x)\right|_{\hat{g}}(0,1)\leq C_{3}
\]
for every $x\in M$, with $C_{3}$ depending only on $m$ and $K$. This concludes the proof of the proposition.

\section{Proof of Theorem \ref{ExtrLSI}: second part}\label{ThExtrII}

By Proposition \ref{DistLike}, we can readily extend Lemma 2.3 in \cite{ZHQ} to the following situation.
\begin{lemma}\label{2.3}
Let $(M^{m}, g)$ be a complete Riemannian  manifold such that
\begin{equation*}
|\mathrm{Ric}|\leq (m-1)K\qquad\mathrm{and}\qquad\mathrm{inj}_{(M,g)}\geq i_{0}>0.
\end{equation*}
Let $u$ be a bounded subsolution to \eqref{EL} on $M$ such that $\left\|u\right\|_{L^2(M)}\leq 1$. Let $o$ be a reference point on $M$. Then there exist positive numbers $r_{0}, a$ and $A$, which may depend on $K, i_{0}$ and the location of the reference point such that
\begin{equation*}
\ u(x)\leq Ae^{-ad^{2}(x, o)},
\end{equation*}
when $d(x, o)\geq r_{0}$.
\end{lemma}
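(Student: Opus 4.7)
The proof mirrors the argument in \cite[Lemma~2.3]{ZHQ}, whose only essential ingredient is a smooth distance-like function with simultaneous uniform control on $|\nabla h|$ and $|\mathrm{Hess}(h)|$. Under bounded geometry this is produced by Zhang; in our weaker setting it is provided by Proposition~\ref{DistLike}. The plan is to construct a Gaussian barrier
\[
\phi(x) := A\,e^{-a h(x)^2},
\]
with $a>0$ small and $A>0$ large, and to show by comparison that $u\leq\phi$ outside a large ball.

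The key computation uses $\Delta(h^2)=2|\nabla h|^2+2h\Delta h$ and $|\nabla(h^2)|^2=4h^2|\nabla h|^2$ to yield
\[
4\Delta\phi = \phi\bigl[\,16a^2 h^2 |\nabla h|^2 - 8a|\nabla h|^2 - 8a h\,\Delta h\,\bigr],
\]
while $-2\phi\ln\phi = (2a h^2-2\ln A)\phi$. Plugging these into the Euler--Lagrange expression $4\Delta\phi-R\phi+2\phi\ln\phi+\lambda\phi$, using \eqref{B} and \eqref{C} (which give $|\nabla h|\leq C_{m,K}$ and $|\Delta h|\leq m\,C_{m,K}$) together with the lower scalar curvature bound $R\geq -m(m-1)K$ coming from the Ricci hypothesis, the leading term for large $h$ is $2a h^2\bigl(8a|\nabla h|^2-1\bigr)\phi$. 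Choosing $a<1/(8C_{m,K}^2)$ makes this non-positive, so $\phi$ is a smooth supersolution of \eqref{EL} on $\{h\geq r_0\}$ for some $r_0=r_0(m,K,\lambda)>0$.

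To compare $u$ with $\phi$, I would exploit the fact that by Lemma~\ref{2.1}(a) together with $\|u\|_{L^2(M)}\leq 1$ the bounded subsolution $u$ satisfies $u(x)\to 0$ as $d(x,o)\to\infty$. Linearizing the nonlinearity via the mean value theorem applied to $s\mapsto s\ln s$, one writes $u\ln u-\phi\ln\phi=(1+\ln\xi)(u-\phi)$ with $\xi\in[\phi,u]$; hence on $\{u>\phi\}$ the difference $w:=u-\phi$ satisfies
\[
4\Delta w \geq \bigl[\,R-\lambda-2-2\ln\xi\,\bigr]\,w.
\]
Since $\xi\leq u\to 0$ at infinity, $-2\ln\xi\to+\infty$, so the coefficient is non-negative on $\{h\geq r_0'\}$ for $r_0'\geq r_0$ sufficiently large in terms of $\inf R$ and $\lambda$. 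Choosing $A$ large enough that $\phi\geq u$ on $\{h=r_0'\}$ and noting that $w\to 0$ at infinity, the weak maximum principle forces $w\leq 0$ on $\{h\geq r_0'\}$. Combined with \eqref{A} this yields $u(x)\leq A\,e^{-a\,r(x)^2}$ on $\{r\geq r_0'-C_{m,K}\}$, which is the desired decay.

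The main obstacle, resolved precisely by Proposition~\ref{DistLike}, is the availability of $h$ with both $|\nabla h|$ and $|\mathrm{Hess}(h)|$ uniformly bounded under only $|\mathrm{Ric}|\leq(m-1)K$ and $\mathrm{inj}_{(M,g)}\geq i_0$; everything else (the choice of Gaussian barrier, the linearization of the $u\ln u$ term and the maximum-principle comparison) is standard and carried out exactly as in \cite{ZHQ}.
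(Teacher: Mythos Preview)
Your proposal is correct and matches the paper's approach: the paper's own proof is the single sentence ``By Proposition~\ref{DistLike}, we can readily extend Lemma~2.3 in \cite{ZHQ} to the following situation,'' and you have simply written out the content of that extension---the Gaussian barrier built from $h$, the supersolution computation, and the maximum-principle comparison---exactly as in Zhang's argument. One minor caveat: you invoke Lemma~\ref{2.1}(a) to get $u\to 0$ at infinity, but as stated in the paper that lemma is for \emph{solutions}; the underlying Moser iteration of course gives the sup bound for subsolutions as well (and this is how it appears in \cite{ZHQ}), so this is only a cosmetic mismatch with the paper's formulation.
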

Since the extremal function $v$ produced in the proof of the first part of Theorem \ref{ExtrLSI} is in particular a bounded solution of \eqref{EL} with $\left\|v\right\|_{L^2(M)}\leq 1$ we get the desired exponential decay \eqref{ExpDec} and thus the second part of Theorem \ref{ExtrLSI}.

\section{Gradient Ricci soliton structure on Ricci solitons}\label{GRSRS}

\subsection{Growth of the soliton field}

In this subsection we prove a general upper bound for the growth of the soliton field $X$ of a generic (not necessarily gradient nor shrinking) Ricci soliton $(M,g)$.

Let us first recall that, by a standard computation, the soliton equation gives a control on the tangential part of $X$ along geodesics. Namely we have the following
\begin{lemma}\label{lem_radial}
In the assumptions of Theorem \ref{th_SolGrowth},
for any unit-speed geodesic $\gamma:[0,L]\to M$ it holds
\[
\left|\frac{d}{dt}\,g( \dot\gamma (t), X(\gamma(t)))\right| \leq |\lambda_S| + (m-1)K.
\]
In particular 
\begin{equation*}
\left|g( \dot\gamma (L), X(\gamma(L)))- g( \dot\gamma (0), X(\gamma(0)))\right|\leq L(|\lambda_S| + (m-1)K).
\end{equation*} 
\end{lemma}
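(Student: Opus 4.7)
The plan is to reduce the statement to a direct application of the soliton equation \eqref{RS} evaluated in the direction of $\dot\gamma$. The key observation is that, since $\gamma$ is a geodesic, $\nabla_{\dot\gamma}\dot\gamma=0$, so the product rule gives
\[
\frac{d}{dt}\,g(\dot\gamma(t),X(\gamma(t)))=g(\dot\gamma,\nabla_{\dot\gamma}X).
\]
This replaces the a priori non-symmetric quantity with something that will naturally pair with the symmetric tensor $\mathcal{L}_X g$.

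Next, I would unwind the Lie derivative: for any vector field $Y$ one has $(\mathcal{L}_X g)(Y,Y)=2g(\nabla_Y X,Y)$. Applied to $Y=\dot\gamma$ and combined with the previous identity, this yields
\[
\frac{d}{dt}\,g(\dot\gamma,X)=\tfrac{1}{2}(\mathcal{L}_Xg)(\dot\gamma,\dot\gamma).
\]
Now I would invoke the soliton equation \eqref{RS}, which rearranges to $\tfrac{1}{2}\mathcal{L}_Xg=\lambda_S g-\Ric$, and evaluate on the unit vector $\dot\gamma$ to obtain
\[
\frac{d}{dt}\,g(\dot\gamma,X)=\lambda_S-\Ric(\dot\gamma,\dot\gamma).
\]
The pointwise inequality then follows directly from the curvature assumption $|\Ric|\leq(m-1)K$ (bearing in mind the paper's convention of writing $\lambda$ for $\lambda_S$).

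For the ``in particular'' clause, I would simply integrate the differential inequality over $[0,L]$ and apply the fundamental theorem of calculus together with the triangle inequality. No obstacle is expected: the lemma is a one-line consequence of the soliton equation once the tangential derivative is expressed via $\mathcal{L}_X g$, and the sole subtlety is keeping track of the geodesic property $\nabla_{\dot\gamma}\dot\gamma=0$ in the product rule.
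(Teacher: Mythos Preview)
Your proposal is correct and follows essentially the same argument as the paper: both compute $\frac{d}{dt}g(\dot\gamma,X)=g(\dot\gamma,\nabla_{\dot\gamma}X)=\tfrac{1}{2}(\mathcal{L}_Xg)(\dot\gamma,\dot\gamma)=(\lambda_S g-\Ric)(\dot\gamma,\dot\gamma)$ using the geodesic equation and the soliton equation, then bound via $|\Ric|\leq (m-1)K$ and integrate.
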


\begin{proof}
We compute
\begin{align*}
\frac{d}{dt}\,g( \dot\gamma (t), X(\gamma(t))) =& g( \nabla_{\dot\gamma}\dot\gamma (t), X(\gamma(t))) +g( \dot\gamma (t), \nabla_{\dot\gamma}X(\gamma(t)))\\
=&\frac{1}2 \mathcal L_X g\,(\dot\gamma(t),\dot\gamma(t))\\
=&(\lambda_S g - \Ric)(\dot\gamma(t),\dot\gamma(t)).
\end{align*}
\end{proof}

The idea of the proof of Theorem \ref{th_SolGrowth} goes as follows. By continuity $|X|$ is bounded on the unitary geodesic ball $B_{1}(o)\subset M$.
One can hence apply Lemma \ref{lem_radial} along all the   geodesics $\gamma_y$ connecting $y \in B_{1}(o)$ to $q\in M$. Suppose that the family of vectors $\{\dot\gamma_y(q)\}_{y \in B_{1}(o)}$ covers an angle in $T_qM$ that is large enough. Then one can obtain a quantitative control of $|X|(q)$ in terms of the $g(\dot\gamma_y , X)$'s.

Accordingly, we need the following estimate of independent interest.

\begin{lemma}[Ricci Hinge Lemma]\label{hinge}
Let $(M^m,g)$ be a complete Riemannian manifold satisfying \linebreak$\Ric\geq-(m- 1)K$ for some constant $K\geq 0$. Let $o,q$ be points of $M$ with $d(o,q)=r>1$. Let $W\in S_qM$, with $S_qM$ denoting the set of unitary vectors in $T_qM$. Then there exists $Z\in S_q M$ such that $\exp_q(sZ)\in B_{1}(o)$ for some $s\in[r-1,r+1]$ and 
\begin{equation}\label{eq_hinge}
|g( Z,W)|\geq\begin{cases} C_0 r^{1-m},&\text{if }K=0,\\
C_1e^{-(m-1)\sqrt K r},&\text{if }K>0,
\end{cases}
\end{equation}
where 
\[
C_0= \frac{1}{2^{m+2}\omega_{m-2}}\vol(B_1(o)),\qquad
C_1=\frac{\left(\sqrt K e^{-\sqrt K}\right)^{m-1}}{8\omega_{m-2}}\vol(B_1(o)),
\]
and $\omega_{m-2}$ is the $(m-2)$-Hausdorff measure of $\mathbb S^{m-2}$. 
\end{lemma}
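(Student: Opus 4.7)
My plan is to estimate the solid angle subtended by $B_1(o)$ as seen from $q$ via Bishop's volume comparison, and to combine this with an elementary spherical measure bound to force the existence of a good direction $Z$. I will work with the ``visible cone''
\[
\Omega:=\bigl\{Z\in S_qM\,:\,\exists\, s\in(0,\mathrm{cut}(Z)]\text{ with }\exp_q(sZ)\in B_1(o)\bigr\}.
\]
The first observation is that, by Hopf--Rinow, every point of $B_1(o)$ is reached from $q$ by a minimizing geodesic, so $\exp_q$ restricted to $\{(s,Z):Z\in\Omega,\ s\leq\mathrm{cut}(Z),\ \exp_q(sZ)\in B_1(o)\}$ surjects onto $B_1(o)$; along any such minimizing geodesic the triangle inequality forces $s=d(q,\exp_q(sZ))\in(r-1,r+1)$. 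Thus every $Z\in\Omega$ is automatically a candidate for the direction required by the statement.

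Next, I would apply Bishop's Jacobian comparison, which under $\Ric\geq-(m-1)K$ bounds $J_{\exp_q}(s,Z)\leq f_K(s)^{m-1}$ (with $f_0(s)=s$ and $f_K(s)=\sinh(\sqrt K s)/\sqrt K$ for $K>0$), valid within the cut locus. Integrating $\vol(B_1(o))$ in polar coordinates centered at $q$ yields
\[
\vol(B_1(o))\leq \int_\Omega\int_{r-1}^{r+1}f_K(s)^{m-1}\,ds\,d\sigma(Z)=I_K(r)\,\sigma(\Omega),
\]
where $\sigma$ is the surface measure on $S_qM$ and $I_K(r):=\int_{r-1}^{r+1}f_K(s)^{m-1}\,ds$, so that $\sigma(\Omega)\geq \vol(B_1(o))/I_K(r)$. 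On the other hand, slicing $S_qM$ in the coordinate $u=g(Z,W)$ gives the elementary estimate
\[
\sigma\bigl(\{Z\in S_qM:|g(Z,W)|<\e\}\bigr)=\omega_{m-2}\int_{-\e}^{\e}(1-u^2)^{(m-3)/2}\,du\leq c_m\,\omega_{m-2}\,\e,
\]
with $c_m=2$ for $m\geq 3$ (since $(1-u^2)^{(m-3)/2}\leq 1$) and $c_2=\pi$ (via $\arcsin\e\leq(\pi/2)\e$). Hence, if every $Z\in\Omega$ satisfied $|g(Z,W)|<\e$, the set $\Omega$ would lie inside this equatorial band and we would deduce $\vol(B_1(o))\leq c_m\omega_{m-2}\,\e\,I_K(r)$, impossible as soon as $\e<\vol(B_1(o))/(c_m\omega_{m-2}I_K(r))$. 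This produces the desired $Z\in\Omega$.

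It only remains to estimate $I_K(r)$ in the two cases. For $K=0$ one has $I_0(r)=\tfrac{1}{m}[(r+1)^m-(r-1)^m]\leq C_m\,r^{m-1}$ on $[1,\infty)$ (e.g.\ by the mean value theorem), which yields the polynomial bound $|g(Z,W)|\gtrsim r^{1-m}$. For $K>0$, using $\sinh t\leq\tfrac12 e^t$ one finds $I_K(r)\leq C(m,K)\,e^{(m-1)\sqrt K(r+1)}$; absorbing the $r$-independent factors $e^{(m-1)\sqrt K}$ and $(\sqrt K)^{m-1}$ into the constant produces the exponential bound $|g(Z,W)|\gtrsim e^{-(m-1)\sqrt K r}$. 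The main technical subtlety of the proof is that Bishop's Jacobian estimate is valid only within the cut locus, which is precisely why the use of minimizing geodesics (guaranteed by Hopf--Rinow) is essential; matching the precise numerical constants $C_0,C_1$ stated in the lemma is then just careful bookkeeping in these two integrals.
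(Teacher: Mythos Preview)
Your argument is correct and is in fact more direct than the paper's. The paper does not use the Bishop Jacobian bound in polar coordinates; instead it applies the Riemannian Brunn--Minkowski inequality to the set $A_t$ of $t$-midpoints between $q$ and $B_1(o)$, then lets $t=\varepsilon/r\to 0$ so that the rescaled exponential map becomes almost Euclidean, and finally uses a Fubini argument on a thin Euclidean annulus to extract a sphere $\partial\mathbb B_{s_0}(0)$ on which the preimage has large $(m-1)$-measure. Only at that stage does the paper compare with the equatorial band $E_\tau$, exactly as you do. Your route replaces the whole Brunn--Minkowski/limiting/Fubini block by a single integration: the inequality $\vol(B_1(o))\le I_K(r)\,\sigma(\Omega)$ already delivers a lower bound on the \emph{spherical} measure of directions hitting $B_1(o)$, which is precisely what the band comparison needs. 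This avoids the auxiliary parameter $\varepsilon$ and the passage to normal coordinates entirely.

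Two minor remarks. First, your slicing formula $\sigma(\{|g(Z,W)|<\varepsilon\})=\omega_{m-2}\int_{-\varepsilon}^{\varepsilon}(1-u^2)^{(m-3)/2}\,du$ is the correct one; the paper's displayed integrand $\sqrt{1-t^2}$ is only literally right in dimension $m=4$, though the crude bound $<2\omega_{m-2}\tau$ is of course valid for all $m\ge 3$. Second, on the constants: with the simple estimate $I_K(r)\le 2\,(\sinh(\sqrt K(r+1))/\sqrt K)^{m-1}$ your method actually yields a constant at least as good as the paper's $C_1$ in the case $K>0$; in the case $K=0$ the naive bound $I_0(r)\le 2(r+1)^{m-1}$ gives a dimensional factor $2^{-m}$ relative to the stated $C_0$, but tracing the paper's own proof for $K=0$ produces the same (or worse) loss, so this is not a defect of your approach.
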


\begin{remark}
Note that, if the sectional curvature is bounded from below by $-K$, then hinge version of Toponogov's comparison theorem (see for instance \cite[Theorem 79]{Pet}) gives that for all $V,U\in S_qM$, $$d(\exp_q(rV),\exp_q(rU))\leq 1$$ provided that 
\[
|g( V,U)| \geq\begin{cases} 1-\frac{1}{2r^2}&\text{if }K=0,\\
1-\frac{\cosh \sqrt K -1}{\sinh^2(\sqrt Kr)},&\text{if }K>0.
\end{cases}
\]
Accordingly, the estimate \eqref{eq_hinge} can be improved in this case to
\begin{equation*}
|g( Z,W)|\geq\begin{cases} C'_0 r^{-1},&\text{if }K=0,\\
C'_1e^{-\sqrt K r},&\text{if }K>0.
\end{cases}
\end{equation*}
\end{remark}

\begin{remark}
Beyond the proof of Theorem \ref{th_SolGrowth}, Lemma \ref{hinge} can be applied more generally to estimate the growth of any vector field $X$ along which one can control $\mathcal L_Xg$, as for instance a Killing vector field. Namely one can obtain the following estimate
\begin{corollary}\label{coro_kill}
Let $(M,g)$ be a complete non-compact $m$-dimensional Riemannian manifold such that $\Ric \geq -(m-1)K$ for some constant $K\geq 0$ and let $X$ be a smooth Killing vector field on $M$. For any reference point $o\in M$ and for all $q\in M$ it holds
\begin{equation*}
|X|(q)\leq \begin{cases}
X^\ast\, C_0^{-1} d(q,o)^{m-1},&\text{if }K=0,\\
X^\ast\, C_1^{-1} e^{(m-1)\sqrt Kd(q,o)},&\text{if }K>0,
\end{cases}
\end{equation*}
with $C_0$, $C_1$ defined as in Lemma \ref{hinge} and $X^\ast$ defined as in Theorem \ref{th_SolGrowth}.
\end{corollary}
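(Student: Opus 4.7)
The plan is to combine the Ricci Hinge Lemma with the observation that the Killing condition upgrades Lemma \ref{lem_radial} into an exact \emph{conservation law} along geodesics, which immediately bounds $|X(q)|$ in terms of $X^\ast$.

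Fix $q\in M$; if $X(q)=0$ there is nothing to prove, otherwise set $W:=X(q)/|X(q)|\in S_qM$. First, I would apply Lemma \ref{hinge} with this choice of $W$ to produce a unit vector $Z\in S_qM$ and a number $s\in[d(q,o)-1,d(q,o)+1]$ such that $\gamma_Z(s):=\exp_q(sZ)\in \overline{B_{1}(o)}$ and
\[
|g(Z,W)|\geq\begin{cases} C_0\, d(q,o)^{1-m}&\text{if }K=0,\\ C_1\,e^{-(m-1)\sqrt K\,d(q,o)}&\text{if }K>0.\end{cases}
\]

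The next step is the conservation law. Since $X$ is Killing, $\mathcal{L}_Xg\equiv0$, so the same computation behind Lemma \ref{lem_radial}---now with the ambient $(\lambda g-\Ric)$ term replaced by $0$---gives
\[
\frac{d}{dt}g\bigl(\dot\gamma_Z(t),X(\gamma_Z(t))\bigr)=\tfrac{1}{2}\mathcal{L}_Xg(\dot\gamma_Z,\dot\gamma_Z)=0,
\]
so $t\mapsto g(\dot\gamma_Z,X\circ\gamma_Z)$ is constant along $\gamma_Z$. Using this equality at $t=0$ and $t=s$ together with Cauchy--Schwarz at the endpoint,
\[
|g(Z,X(q))|=\bigl|g(\dot\gamma_Z(s),X(\gamma_Z(s)))\bigr|\leq|X(\gamma_Z(s))|\leq X^\ast,
\]
where $\gamma_Z(s)\in\overline{B_{1}(o)}$ is provided by the hinge lemma.

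Finally, since $g(Z,X(q))=g(Z,W)\,|X(q)|$, dividing yields
\[
|X(q)|\leq\frac{X^\ast}{|g(Z,W)|},
\]
which is exactly the claimed estimate upon substituting the two cases of the lower bound on $|g(Z,W)|$. There is no real obstacle here: the entire geometric content sits in Lemma \ref{hinge}, and the rest is bookkeeping; the only thing to be careful about is that the Killing property gives an \emph{exact} conservation (not just a bound) along geodesics, so no extra length-dependent error term appears---which is exactly what lets one replace the quasi-exponential bound obtained in Theorem \ref{th_SolGrowth} (where $|X|(q)$ is estimated by $d(q,o)^m$ or $d(q,o)e^{(m-1)\sqrt K d(q,o)}$) by the sharper $d(q,o)^{m-1}$ or $e^{(m-1)\sqrt K d(q,o)}$ bounds stated in the corollary.
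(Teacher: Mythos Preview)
Your proof is correct and follows exactly the approach implicit in the paper: the corollary is stated there without proof, as an immediate adaptation of the argument for Theorem \ref{th_SolGrowth}, and you have carried out precisely that adaptation. Your observation that the Killing condition turns the differential inequality of Lemma \ref{lem_radial} into an exact conservation law---thereby eliminating the linear-in-$r$ error term and yielding the sharper exponents---is exactly the point.
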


\end{remark}

\begin{proof}[Proof (of Lemma \ref{hinge})]
Set $A_1:=B_1(o)$ and $A_0:=\{q\}$. For $t\in[0,1]$ define 
\[
A_t:=\{x\in M\ :\ \exists\, \gamma:[0,1]\to M\text{ minimal geodesic with }\gamma(0)=q,\ \gamma(1)\in A_1\text{ and }\gamma(t)=x\}.
\]

The lower bound on the Ricci curvature and Brunn-Minkowski's inequality give that, for all $t\in[0,1]$,
\begin{equation}\label{Brunn-Minkowski}
\vol(A_t)\geq t\frac{\sinh^{m-1}(\sqrt{K}tD)}{\sinh^{m-1}(\sqrt{K}D)}\vol(A_1), 
\end{equation}
where $D:=\sup_{y\in A_1}d(q,y)$ satisfies $r\leq D\leq r+1$; see \cite[Theorem 3.2]{Ohta} or \cite[Proposition 1.4.11]{DaiWei}. Let $\e>0$ be small enough, to be chosen later. Setting $t=\e/r$ in \eqref{Brunn-Minkowski} yields
\begin{equation*}
\vol(A_{\e/ r})\geq \frac{\e^m} {r^m}\frac{(\sqrt{K}D)^{m-1}}{\sinh^{m-1}(\sqrt{K}D)}\vol(A_1)\geq \frac{\e^m} {r}\frac{(\sqrt{K})^{m-1}}{\sinh^{m-1}(\sqrt{K}(r+1))}\vol(A_1).
\end{equation*}
For $R>0$, let $\mathbb B_R(0)$ be the ball of radius $R$ centered at $0$ in $\mathbb R^n\cong T_qM$. For $\e<\mathrm{inj}_{g}(q)$, the rescaled exponential map $\chi_\e:=\exp_q (\e\cdot): \mathbb B_{1}(0)\to B_{\varepsilon}(q)$ is a $C^\infty$-diffeomorphism. Moreover the locally Euclidean character of the Riemannian metric implies that $\e^{-m}\chi_\e^\ast d\mathrm{vol} \to d\mathrm{vol}_{Eucl}$ as $\e\to 0$. 
In particular, choosing $\e$ small enough (depending on $q$) and defining $\tilde A_{\e/r}:=\chi_\e^{-1}(A_{\e/r})$, we have that

\begin{equation}\label{BM-normal}
\vol_{Eucl}(\tilde A_{\e/ r})\geq \frac1{2\e^m}\vol(A_{\e/ r}) \geq\frac1{2r}\frac{(\sqrt{K})^{m-1}}{\sinh^{m-1}(\sqrt{K}(r+1))}\vol(A_1)\geq C_2 \frac{1}{r}e^{-(m-1)\sqrt K r},
\end{equation}
for some positive constant $C_2=C_2(m,K,o)=K^{\frac{m-1}2}2^{m-2}e^{-(m-1)\sqrt K}\vol(A_1)$ independent from $q$. Moreover, by definition $A_{\e/r}\subset \overline{B_{\e(1+1/r)}(q)}\setminus B_{\e(1-1/r)}(q)$, which in turn implies that $\tilde A_{\e/r}\subset \overline{\mathbb{B}_{(1+1/r)}(0)}\setminus \mathbb{B}_{(1-1/r)}(0)$.
Let $\mathcal A_s$ be the volume measure on $\partial \mathbb B_s(0)$. By Fubini's theorem, we have that
\begin{eqnarray}\label{fubini}
\vol_{Eucl}(\tilde A_{\e/ r})&=&\int_{\mathbb B_{(1+1/r)}(0)\setminus \mathbb B_{(1-1/r)}(0)}\mathds{1}_{\tilde A_{\e/r}}(x) d\mathrm{vol}_{Eucl}(x) \\
&=& \int_{1-1/r}^{1+1/r}\mathcal A_s(\partial \mathbb B_s(0)\cap \tilde A_{\e/r})ds\leq\frac{2}{r}\max_{s\in [1-1/r,1+1/r]}\mathcal A_s(\partial \mathbb B_s(0)\cap \tilde A_{\e/r})\nonumber.
\end{eqnarray}
Combining \eqref{BM-normal} and \eqref{fubini}, we get that there exists $s_0\in [1-1/r,1+1/r]$ such that 
\[
\mathcal A_{s_0}(\partial \mathbb B_{s_0}(0)\cap \tilde A_{\e/r})\geq  \frac{C_2}{2}e^{-(m-1)\sqrt K r}.
\]
A further rescaling gives that the area of 
\[ 
\hat A:=\frac 1{s_0}\left(\partial \mathbb B_{s_0}(0)\cap \tilde A_{\e/r}\right)\subset \partial \mathbb B_1(0) \cong S_qM
\]
satisfies
\[
\mathcal A_1(\hat A)\geq (1+1/r)^{1-m}\frac{C_2}{2}e^{-(m-1)\sqrt K r}\geq \frac{C_2}{2^m}e^{-(m-1)\sqrt K r}.
\]
By construction, for every $V\in \hat A$ it holds that $\exp_q(sV)\in B_{1}(o)$ for some $s\in[r-1,r+1]$. Set
\[
E_\tau:=\left\{V\in S_qM\ : \ |g( V,W)|\leq \frac{C_2}{2^{m+1}\omega_{m-2}}e^{-(m-1)\sqrt K r}=:\tau \right\}.
\]
It remains only to prove that there exists $Z\in \hat A\setminus E_\tau$. To this end, we compute
\[
\mathcal A_1(E_\tau)=\int_{-\tau}^\tau \omega_{m-2}\sqrt{1-t^2}dt< 2\omega_{m-2}\tau.
\]
In particular 
\[
\mathcal A_1(\hat A)> \mathcal A_1(E_\tau),\]
which means that 
$\hat A\setminus E_\tau$ is nonempty.
\end{proof}

\begin{proof}[Proof (of Theorem \ref{th_SolGrowth})]
We present the proof in case $K>0$. The case $K=0$ can be treated similarly. The result is trivial if $X(q)=0$ or if $q\in B_{1}(o)$. Otherwise, an application of Lemma \ref{hinge} with $W=\frac{X(q)}{|X(q)|}$ gives that there exists $Z\in S_q M$ such that 
\begin{equation}\label{lbX}
|g( Z,X(q))|\geq C_1e^{-(m-1)\sqrt K r}|X|(q)
\end{equation}
and $\exp_q(sZ)\in B_{1}(o)$ for some $s\in[r-1,r+1]$. Applying Lemma \ref{lem_radial} along the unit speed geodesic $\gamma_Z(t):=\exp_q(tZ)$, we get
\begin{align*}
\left|g( Z, X(q))\right|&=
\left|g( \dot\gamma_Z (0), X(\gamma_Z(0)))\right|\\
&\leq \left|g( \dot\gamma_Z (s), X(\gamma_Z(s)))\right| +  s(|\lambda_S| + (m-1)K)\\
& \leq
X^\ast+ (r+1)(|\lambda_S| + (m-1)K) .
\end{align*}
This latter, together with \eqref{lbX}, gives the aimed estimate with
\[
C:=(X^\ast+2|\lambda_S| + 2K(m-1))C_1^{-1},
\]
where $C_1$ is the constant of Lemma \ref{hinge}.
\end{proof}

\subsection{Existence of a gradient Ricci soliton structure}

\begin{proof}[Proof of Theorem \ref{th_RicSol}]
We follow the proof given in \cite{ELNM} for compact Ricci solitons, which adopts an ``elliptic'' approach. The additional difficulty here is due to the  final integration by parts, which has to be justified in the noncompact case.

Let $(M, g, X)$ be a complete shrinking Ricci soliton, i.e. \eqref{RS} holds with $\lambda_{S}>0$.
According to Theorem \ref{ExtrLSI} and our assumptions there exists a smooth strictly positive function $\tilde v$ on the rescaled manifold $(M,2\lambda_Sg)$ satisfying
\[
4\Delta_{2\lambda_Sg} v - R_{2\lambda_Sg}v +2v\ln v + \lambda(M,2\lambda_Sg) v =0,
\]
with $\lambda$ being the infimum of the Log Sobolev functional $\mathcal L(v, M, 2\lambda_Sg)$. By a conformal change, we get that
\begin{equation}\label{EL1}
4\Delta_{g} v - R_{g}v +4\lambda_Sv\ln v + 2\lambda_S\lambda(M,2\lambda_Sg) v =0
\end{equation}
on $(M,g)$. Setting 
$f=-\frac{m}{2}\ln(4\pi)-2\ln v$, we get that $f$ satisfies
\begin{equation}\label{eq-v-RS}
2\Delta f +R - |\nabla f|^2 + 2\lambda_Sf = 2\lambda_S\left[\lambda(M,2\lambda_Sg) - \frac{m}{2}\ln(4\pi)\right].
\end{equation}
We remark in particular that the RHS of this latter inequality is a constant function. Accordingly, recalling that $div_f(\cdot):=e^f div(e^{-f}\cdot)$ and using also the commutation formula
\[
\ \Delta\nabla_{i}f-\nabla_{i}\Delta f=\RRR_{is}\nabla^s f,
\]
we can compute
\begin{align*}
&(div_f(2(\Ric+\mathrm{Hess}(f)-\lambda_S g)))_i\\
=&e^{f}g^{kj}\nabla_k[2(\RRR_{ij}+\nabla^2_{ij}f-\lambda_S g_{ij})e^{-f}]\\
=&2e^{f}g^{kj}(\nabla_k\RRR_{ij}+\nabla_k\nabla^2_{ij}f)e^{-f}-2e^{f}[(\RRR_{ij}+\nabla^2_{ij}f-\lambda_S g_{ij})g^{jk}\nabla_k f]e^{-f}\\
=&(\nabla_i\RRR+2\Delta\nabla_i f)-2[(\RRR_{ij}+\nabla^2_{ij}f-\lambda_S g_{ij})g^{jk}\nabla_{k}f]\\
=&(\nabla_i R+2\nabla_i\Delta f+2\RRR_{is}\nabla^s f)-2[(\RRR_{ij}+\nabla^2_{ij}f-\lambda_S g_{ij})g^{jk}\nabla_k f]\\
=&(\nabla_i \RRR+2\nabla_i\Delta f-2g^{jk}\nabla^2_{ij} f\nabla_k f+2\lambda_S\nabla_i f)\\
=&\nabla_i (\RRR+2\Delta f -|\nabla f|^2+2\lambda_S f)\\
=&0.
\end{align*}
Using this and the fact that from the soliton equation \eqref{RS} we know that
\[
\ \nabla_l{X_k}+\nabla_{k}X_{l}=-2\RRR_{lk}+ 2\lambda_{S}g_{lk},
\]
we compute
\begin{align*}
&div_{f}[i_{\nabla f- X}(\Ric+\mathrm{Hess}(f)-\lambda_S g)]\\
=&e^{f}g^{li}\nabla_{l}[(\nabla_{k}f-X_{k})g^{kj}(\RRR_{ij}+\nabla^2_{ij}f-\lambda_S g_{ij})e^{-f}]\\
=&(\nabla^2_{lk}f-\nabla_lX_k)g^{kj}g^{li}(\RRR_{ij}+\nabla^2_{ij}f-\lambda_S g_{ij})\\
=&\frac{1}{2}[(2\nabla^2_{lk}f-\nabla_l X_{k}-\nabla_k X_l)g^{kj}g^{li}(\RRR_{ij}+\nabla^2_{ij}f-\lambda_S g_{ij})]\\
=&\frac{1}{2}[(2\nabla^2_{lk}f+2\RRR_{lk}-2\lambda_S g_{lk})g^{kj}g^{li}(\RRR_{ij}+\nabla^2_{ij}f-\lambda_S g_{ij})]\\
=&|\RRR_{ij}+\nabla^2_{ij}f-\lambda_S g_{ij}|^2,
\end{align*}
where in the third equality we have substituted $2\nabla_l X_{k}$ with $\nabla_l X_{k}+\nabla_k X_{l}$, since the skew-symmetric part of $\nabla X$ vanishes once we contract it with $\Ric+\mathrm{Hess}(f)-\lambda_{S} g$.
Hence we can conclude that
\[
\ 0\leq |\Ric+ \mathrm{Hess}(f)-\lambda_S g|^2 =div_f T
\]
for the one form 
\[
T=i_{\nabla f- X}(\Ric+\mathrm{Hess}(f)-\lambda_S g),
\]
and the theorem is proved provided that $\int_{M}div_{f}Te^{-f}=0$.

To this end, we are going to apply a sort of Karp's version of Stokes theorem on complete manifolds with density; \cite{Karp}. We start considering a sequence of Hessian cut-off $\{\chi_n\}$ on $M$, whose existence is assured by Corollary \ref{coro-hess}. 
Recalling the expression for $f$, we have thus to check that
\begin{equation}\label{Tv}
\int_M v^2 \left(\Ric-2\frac{\mathrm{Hess} (v)}{v}+2\frac{dv\otimes dv}{v^2}-\lambda_S g\right)\left(\frac{\nabla v}{v} - X,\nabla \chi_n \right) d\mathrm{vol} \to 0,\quad\textrm{as }n\to\infty.
\end{equation}

Beforehand, let us note for later purposes that by Theorem \ref{th_SolGrowth} and \eqref{ExpDec}, fixed a reference point $o\in M$, there exists a positive constant $b>0$ such that 
$e^{bd(x,o)^2}|Xv|(x)\to 0$ as $d(x,o)\to\infty$. In particular $|Xv|\in L^\infty (M)$ and, using also Bishop-Gromov theorem, $|Xv|\in L^2(M)$. 
%
In the following, given $g,h:M\to \mathbb{R}$, $g\lesssim f$ means that there exists a positive constant $C$ such that $|f(x)|\leq C|g(x)|$ for all $x\in M$. Moreover integrals are meant with respect to $d\mathrm{vol}$ unless otherwise specified.

To prove \eqref{Tv}, we write the integral as a sum of four different terms, which will be dealt with separately in the following.

\textbf{a)} Firstly, since $(\Ric - \lambda_{S} g)$ is bounded,
\begin{align*}
&\left|\int_M v^2 \left(\Ric-\lambda_S g\right)\left(\frac{\nabla v}{v} - X,\nabla \chi_n \right)\,d\mathrm{vol}\right|\\
&\lesssim \left\|v\nabla v\right\|_{L^1(\operatorname {supp} \nabla\chi_n)}+\left\|Xv^2\right\|_{L^1(\operatorname {supp} \nabla\chi_n)}\\
&\lesssim \left\|v\right\|_{L^2(\operatorname {supp} \nabla\chi_n)}\left\|\nabla v\right\|_{L^2(\operatorname {supp} \nabla\chi_n)}+\left\|Xv\right\|_{L^\infty(\operatorname {supp} \nabla\chi_n)}\left\|v\right\|_{L^1(\operatorname {supp} \nabla\chi_n)}\to 0,
\end{align*}
as $n\to\infty$.

\textbf{b)} Secondly, 
\begin{equation*}
\left|\int_M v^2 \left(\frac{\mathrm{Hess} (v)}{v}\right)\left(\frac{\nabla v}{v} - X,\nabla \chi_n \right)\,d\mathrm{vol}\right|
\lesssim (\left\|\nabla v\right\|_{L^2(\operatorname {supp} \nabla\chi_n)}+\left\|X v\right\|_{L^2(\operatorname {supp} \nabla\chi_n)})\|\mathrm{Hess} (v)\|_{L^2}\to 0,
\end{equation*}
as $n\to\infty$, provided one can prove that $\mathrm{Hess} (v)\in L^2(M)$. According to a global Calder\'on-Zygmund inequality, \cite[Theorem C]{GP}, this is verified if $v\in L^2(M)$ and $\Delta v\in L^2(M)$. This latter is true since, by equation \eqref{EL1},
\[
\|\Delta v\|_{L^2}\lesssim  \|v\|_{L^2}+\|v\ln v\|_{L^2},
\]
and, using also that $|\ln t|\leq \sqrt{t}+\sqrt{1/t}$ for all $t>0$,
\[
v^2\ln^2 v \leq v^3+2 v^2 + v \in L^1(M,d\mathrm{vol}).
\]

\textbf{c)} Similarly, 
\begin{equation}\label{c}
\left|\int_M v^2 \left(\frac{dv\otimes dv}{v^2}\right)\left(X,\nabla \chi_n \right)\,d\mathrm{vol}\right|\lesssim (\left\|\nabla v\right\|_{L^2(\operatorname {supp} \nabla\chi_n)}\left\|X \right\|_{L^\infty(\operatorname {supp} \nabla\chi_n)}).
\end{equation}
Let $h$ be as in Proposition \ref{DistLike} and let $\alpha(n)\in[n-1,n]$ be such that $ \left\{h=\alpha(n)\right\}$ is a regular hypersurface. By \eqref{ExpDec}, the coarea formula and Bishop-Gromov's inequality,
\begin{align}\label{L2decay}
\left(\int_{B_{n-1}\setminus B_{n-1-C_{m,K}}(o)}v^2\,d\mathrm{vol}\right)^{1/2}\leq\left(\int_{n-1-C_{m,K}}^{n-1}  \mathcal H_{m-1}(\partial B_{t}(o))\sup_{\partial B_{t}(o)}v^2dt\right)^{1/2} \lesssim e^{-n^\beta} 
\end{align}
for all $\beta<2$, with $\mathcal H_{m-1}$ denoting the $(m-1)$-dimensional Hausdorff measure of $\partial B_{t}(o)$. Accordingly,
\begin{align*}
\int_{B_{n-1}\setminus B_{n-1-C_{m,K}}(o)}|v\nabla v|\,d\mathrm{vol}\leq 
\left(\int_{B_{n-1}\setminus B_{n-1-C_{m,K}}(o)} v^2\,d\mathrm{vol}\right)^{1/2}\|\nabla v\|_{L^2(M)}\lesssim e^{-n^\beta}.
\end{align*}
Since $\nabla h$ is bounded, using again the coarea formula we can choose the $\alpha(n)$'s so that
$$
\int_{\{h= \alpha(n)\}} |v \nabla v|d\mathcal H_{m-1} \lesssim  e^{-n^{\beta'}}
$$
as $n\to\infty$ for all $\beta'<\beta$, and in particular for some $\beta'>1$. This latter estimate, \eqref{L2decay} and the $L^2$ integrability of $\Delta v$ proved above imply
\begin{align*}
\left\|\nabla v\right\|_{L^2(\operatorname {supp} \nabla\chi_n)}
\leq& \int_{\{h>\alpha(n)\}}\left\langle\nabla v,\nabla v\right\rangle\,d\mathrm{vol} \\
=& \int_{\{h=\alpha(n)\}} v\left\langle\nabla v,-\frac{\nabla  h}{|\nabla  h|}\right\rangle d\mathcal{H}_{m-1} -\int_{\{h>\alpha(n)\}} v\Delta v\,d\mathrm{vol}\\
\leq& \int_{\{h=\alpha(n)\}} |v \nabla v|d\mathcal{H}_{m-1}
+ \left(\int_{\{h>\alpha(n)\}} v^2\,d\mathrm{vol}\right)^{1/2}\left(\int_{\{h>\alpha(n)\}} (\Delta v)^2\,d\mathrm{vol}\right)^{1/2}\\
\lesssim& e^{-n^{\beta'}}.
\end{align*}
Inserting in \eqref{c} and applying Theorem \ref{th_SolGrowth} give
\begin{equation*}
\left|\int_M v^2 \left(\frac{dv\otimes dv}{v^2}\right)\left(X,\nabla \chi_n \right)\,d\mathrm{vol}\right|\to 0,
\end{equation*}
as $n\to\infty$.

\textbf{d)} Finally, 
\begin{align*}
\int_M\left\langle\nabla v,\frac{\nabla v}{v}\right\rangle\left\langle\nabla v,\nabla \chi_n\right\rangle\,d\mathrm{vol}
=&\int_M\left\langle\nabla v,\nabla \ln v\right\rangle\left\langle\nabla v,\nabla \chi_n\right\rangle\,d\mathrm{vol}\\
=&-\int_M\ln v \Delta v\left\langle\nabla v,\nabla \chi_n\right\rangle\,d\mathrm{vol}-\int_M\ln v\left\langle\nabla v,\nabla\left\langle\nabla v,\nabla \chi_n\right\rangle\right\rangle\,d\mathrm{vol}.
\end{align*}
On the one hand, by \eqref{EL1},
\begin{align*}
\|\ln v\Delta v\|_{L^2}\lesssim \|v\ln v\|_{L^2}+\|v\ln^2 v\|_{L^2},
\end{align*}
and, using that $|\ln x|\leq 2\sqrt[4]{x}+2\sqrt[4]{1/x}$ for all $x>0$, we know that
\[
v^2\ln^4 v \lesssim (v^3+v^{5/2}+v^2 +v^{3/2}+ v) \in L^1(M,d\mathrm{vol}).
\]
Then 
\begin{align*}
\left|\int_M\ln v \Delta v\left\langle\nabla v,\nabla \chi_n\right\rangle\,d\mathrm{vol}\right|
&\leq \|\ln v\Delta v\|_{L^2}\|\nabla v\|_{L^2}\|\nabla\chi_n\|_{L^\infty}\to 0,
\end{align*}
as $n\to \infty$.
On the other hand,
\begin{align*}
\int_M\ln v\left\langle\nabla v,\nabla\left\langle\nabla v,\nabla \chi_n\right\rangle\right\rangle\,d\mathrm{vol}
=&\int_M\ln v\left[\left\langle\nabla_{\nabla v}\nabla v,\nabla \chi_n\right\rangle+
\left\langle\nabla v,\nabla_{\nabla v}\nabla \chi_n\right\rangle\right]\,d\mathrm{vol}\\
=&\int_M \ln v \mathrm{Hess}(v)(\nabla v,\nabla \chi_n)\,d\mathrm{vol}+\int_M \ln v \mathrm{Hess}(\chi_n)(\nabla v,\nabla v)\,d\mathrm{vol}.
\end{align*}
Since $\|\nabla\chi_n\|_{L^\infty}\to 0$ and, as proved above, $\mathrm{Hess} (v)\in L^2(M)$, we have that
\[
\left|\int_M \ln v \mathrm{Hess}(v)(\nabla v,\nabla \chi_n)\,d\mathrm{vol}\right|\to 0
\]
as $n\to \infty$ provided that 
\begin{equation}\label{lnvnablav}
(\ln v) \nabla v\in L^2(M).
\end{equation}
This is true since
\begin{align*}
\int_M \ln^2 v |\nabla v|^2\,d\mathrm{vol}
=& \int_M \left\langle\nabla v,\ln^2 v \nabla v\right\rangle\,d\mathrm{vol}\\
=&\int_M \left\langle\nabla v, \nabla (v\ln^2 v- 2v\ln v +2v)\right\rangle\,d\mathrm{vol}\\
=&-\int_M \Delta v (v\ln^2 v- 2v\ln v +2v)\,d\mathrm{vol}\\
\leq& \|\Delta v\|_{L^2}\|v\ln^2 v- 2v\ln v +2v\|_{L^2}<\infty,
\end{align*}
where the integration by part can be justified using the equation as above. Moreover, since $\|\mathrm{Hess}\chi_n\|_{L^\infty}\to 0$, 
we have that
\[
\left|\int_M \ln v \mathrm{Hess}(\chi_n)(\nabla v,\nabla v)\,d\mathrm{vol}\right|\to 0
\]
as $n\to \infty$ provided that $\ln v |\nabla v|^2\in L^1(M)$. This is true by \eqref{lnvnablav} and the fact that, thanks to the exponential decay of $v$, $|\ln v|\lesssim |\ln v|^2$.
\end{proof}

\begin{acknowledgement*}
This work is supported by a public grant overseen by the French National Research Agency (ANR) as part of the ``Investissements d'Avenir'' program (reference: ANR-10-LABX-0098). The authors are members of the ``Gruppo Nazionale per l'Analisi Matematica, la Probabilit\'a e le loro Applicazioni'' (GNAMPA) of the Istituto Nazionale di Alta Matematica (INdAM). 
We would like to thank Luciano Mari for comments on a previous version of the paper.
\end{acknowledgement*}

\bibliographystyle{amsplain}
\bibliography{ExtrLSI}

\providecommand{\bysame}{\leavevmode\hbox to3em{\hrulefill}\thinspace}
\providecommand{\MR}{\relax\ifhmode\unskip\space\fi MR }
\providecommand{\MRhref}[2]{%
  \href{http://www.ams.org/mathscinet-getitem?mr=#1}{#2}
}
\providecommand{\href}[2]{#2}
\begin{thebibliography}{10}

\bibitem{And}
Michael~T. Anderson, \emph{Convergence and rigidity of manifolds under {R}icci
  curvature bounds}, Invent. Math. \textbf{102} (1990), no.~2, 429--445.

\bibitem{Au}
Thierry Aubin, \emph{Nonlinear analysis on manifolds. {M}onge-{A}mp\`ere
  equations}, Grundlehren der Mathematischen Wissenschaften [Fundamental
  Principles of Mathematical Sciences], vol. 252, Springer-Verlag, New York,
  1982.

\bibitem{CN}
Jos{\'e}~A. Carrillo and Lei Ni, \emph{Sharp logarithmic {S}obolev inequalities
  on gradient solitons and applications}, Comm. Anal. Geom. \textbf{17} (2009),
  no.~4, 721--753.

\bibitem{CheegerPhD}
Jeff Cheeger, \emph{Comparison and finiteness theorems for riemannian
  manifolds}, ProQuest LLC, Ann Arbor, MI, 1967, Thesis (Ph.D.)--Princeton
  University.

\bibitem{CCG_3}
Bennett Chow, Sun-Chin Chu, David Glickenstein, Christine Guenther, James
  Isenberg, Tom Ivey, Dan Knopf, Peng Lu, Feng Luo, and Lei Ni, \emph{The
  {R}icci flow: techniques and applications. {P}art {III}. {G}eometric-analytic
  aspects}, Mathematical Surveys and Monographs, vol. 163, American
  Mathematical Society, Providence, RI, 2010.

\bibitem{DaiWei}
Xianzhe Dai and Guofang Wei, \emph{Comparison geometry for {R}icci curvature},
  http://math.ucsb.edu/~dai/Ricci-book.pdf.

\bibitem{ELNM}
Manolo Eminenti, Gabriele La~Nave, and Carlo Mantegazza, \emph{Ricci solitons:
  the equation point of view}, Manuscripta Math. \textbf{127} (2008), no.~3,
  345--367.

\bibitem{Fried}
Avner Friedman, \emph{Partial differential equations of parabolic type},
  Prentice-Hall, Inc., Englewood Cliffs, N.J., 1964.

\bibitem{Ga}
Matthew~P. Gaffney, \emph{A special {S}tokes's theorem for complete
  {R}iemannian manifolds}, Ann. of Math. (2) \textbf{60} (1954), 140--145.

\bibitem{GW}
R.~E. Greene and H.~Wu, \emph{{$C^{\infty }$} convex functions and manifolds of
  positive curvature}, Acta Math. \textbf{137} (1976), no.~3-4, 209--245.

\bibitem{G}
Batu G{\"u}neysu, \emph{Sequences of {L}aplacian cut-off functions}, J. Geom.
  Anal. \textbf{26} (2016), no.~1, 171--184.

\bibitem{GP}
Batu G{\"u}neysu and Stefano Pigola, \emph{The {C}alder\'on-{Z}ygmund
  inequality and {S}obolev spaces on noncompact {R}iemannian manifolds}, Adv.
  Math. \textbf{281} (2015), 353--393.

\bibitem{HH}
E.~Hebey and M.~Herzlich, \emph{Harmonic coordinates, harmonic radius and
  convergence of {R}iemannian manifolds}, Rend. Mat. Appl. (7) \textbf{17}
  (1997), no.~4, 569--605 (1998).

\bibitem{Heb}
Emmanuel Hebey, \emph{Nonlinear analysis on manifolds: {S}obolev spaces and
  inequalities}, Courant Lecture Notes in Mathematics, vol.~5, New York
  University, Courant Institute of Mathematical Sciences, New York; American
  Mathematical Society, Providence, RI, 1999.

\bibitem{ivey}
Thomas Ivey, \emph{Ricci solitons on compact three-manifolds}, Differential
  Geom. Appl. \textbf{3} (1993), no.~4, 301--307.

\bibitem{JK}
J{\"u}rgen Jost and Hermann Karcher, \emph{Geometrische {M}ethoden zur
  {G}ewinnung von a-priori-{S}chranken f\"ur harmonische {A}bbildungen},
  Manuscripta Math. \textbf{40} (1982), no.~1, 27--77.

\bibitem{Karp}
Leon Karp, \emph{On {S}tokes' theorem for noncompact manifolds}, Proc. Amer.
  Math. Soc. \textbf{82} (1981), no.~3, 487--490.

\bibitem{N}
Aaron Naber, \emph{Noncompact shrinking four solitons with nonnegative
  curvature}, J. Reine Angew. Math. \textbf{645} (2010), 125--153.

\bibitem{Ohta}
Shin-ichi Ohta, \emph{On the measure contraction property of metric measure
  spaces}, Comment. Math. Helv. \textbf{82} (2007), no.~4, 805--828.
  \MR{2341840}

\bibitem{P1}
G.~Perelman, \emph{The entropy formula for the {R}icci flow},
  arXiv:math/0211159v1, 2003.

\bibitem{Pet}
Peter Petersen, \emph{Riemannian geometry}, second ed., Graduate Texts in
  Mathematics, vol. 171, Springer, New York, 2006. \MR{2243772}

\bibitem{Ro}
O.~S. Rothaus, \emph{Logarithmic {S}obolev inequalities and the spectrum of
  {S}chr\"odinger operators}, J. Funct. Anal. \textbf{42} (1981), no.~1,
  110--120.

\bibitem{T}
Luen-Fai Tam, \emph{Exhaustion functions on complete manifolds}, Recent
  advances in geometric analysis, Adv. Lect. Math. (ALM), vol.~11, Int. Press,
  Somerville, MA, 2010, pp.~211--215.

\bibitem{ZHQ}
Qi~S. Zhang, \emph{Extremal of log {S}obolev inequality and {$W$} entropy on
  noncompact manifolds}, J. Funct. Anal. \textbf{263} (2012), no.~7,
  2051--2101.

\bibitem{Z}
Zhu-Hong Zhang, \emph{On the completeness of gradient {R}icci solitons}, Proc.
  Amer. Math. Soc. \textbf{137} (2009), no.~8, 2755--2759.

\end{thebibliography}
 
\end{document}